
\documentclass[11pt]{amsart}

\usepackage{amsmath}
\usepackage{datetime}
\usepackage{amsthm}
\usepackage{mathrsfs}
\usepackage{graphicx}
\usepackage[usenames, dvipsnames]{xcolor}
\definecolor{lime}{rgb}{0.2, 0.8, 0.2}
\def\bel{\begin{equation}\label}
\def\eeq{\end{equation}}
\def\bel{\begin{equation}\label}
\def\eeq{\end{equation}}
\newtheorem{Definition}{Definition}[section]
\newtheorem{Theorem}{Theorem}[section]
\newtheorem{Remark}{Remark}[section]
\newtheorem{Lemma}{Lemma}[section]

\newtheorem{Proposition}{Proposition}[section]
\newtheorem{Corollary}{Corollary}[section]
\newtheorem{Example}{Example}[section]

\def\bel{\begin{equation}\label}
\def\eeq{\end{equation}}
\begin{document}
\title
{Unbounded variation and solutions  
 of impulsive control systems}
 \thanks{ This research is partially supported by  the Gruppo Nazionale per l' Analisi Matematica, la Probabilit\`a e le loro Applicazioni (GNAMPA) of the Istituto Nazionale di Alta Matematica (INdAM), Italy  and 
by Padova University grant PRAT 2015 ``Control
of dynamics with reactive constraints''}

 \textwidth=135mm
\def\fudge{\mathchoice{}{}{\mkern.5mu}{\mkern.8mu}}
\def\bbc#1#2{{\rm \mkern#2mu\vbar\mkern-#2mu#1}}
\def\bbb#1{{\rm I\mkern-3.5mu #1}}
\def\bba#1#2{{\rm #1\mkern-#2mu\fudge #1}}
\def\bb#1{{\count4=`#1 \advance\count4by-64 \ifcase\count4\or\bba A{11.5}\or0
\bbb B\or\bbc C{5}\or\bbb D\or\bbb E\or\bbb F \or\bbc G{5}\or\bbb H\or
\bbb I\or\bbc J{3}\or\bbb K\or\bbb L \or\bbb M\or\bbb N\or\bbc O{5} \or
\bbb P\or\bbc Q{5}\or\bbb R\or\bbc S{4.2}\or\bba T{10.5}\or\bbc U{5}\or%
\bbb P\or\bbc Q{5}\or\bbb R\or\bba S{8}\or\bba T{10.5}\or\bbc U{5}\or
\bba V{12}\or\bba W{16.5}\or\bba X{11}\or\bba Y{11.7}\or\bba Z{7.5}\fi}}
\def \ol{\overline}
\def \Z {{\bb Z}}
\def \R {{\bb R}}
\def \SS {{\mathcal  T}}
\def \K {{\mathcal  K}}
\def \C {{\mathcal  C}}
\def \Sf {{\bf S}}
\def \N {{\bb N}}
\def \Q {{\bb Q}}
\def \vsm{\vskip 0.2 truecm}
\def \vv{\vskip 0.5 truecm}
\def \vs {{\vskip 1 truecm}}
 \def\L{\mathcal{L}}
  \def\U{\Gamma}
\def\bel{\begin{equation}\label}
\def\eeq{\end{equation}}
\def\ab{\overline{a}}
\def\cb{\overline{c}}
\def\db{\overline{d}}
\def\eb{\overline{e}}
\def\fb{\overline{f}}
\def\gb{\overline{g}}
\def\hb{\overline{h}}
\def\ib{\overline{i}}
\def\jb{\overline{j}}
\def\kb{\overline{k}}
\def\lb{\overline{l}}
\def\mb{\overline{m}}
\def\nb{\overline{n}}
\def\ob{\overline{o}}
\def\pb{\overline{p}}
\def\qb{\overline{q}}
\def\rb{\overline{r}}
\def\sb{\overline{s}}
\def\tb{\overline{t}}
\def\ub{{\overline{u}_0}}
\def\vb{\overline{v}}
\def\wb{\overline{w}}
\def\xb{{\overline{x}_0}}
\def\yb{\overline{y}}
\def\zb{\overline{z}}
\def\SS{\mathcal S}
\def\gnu{\bf GNU}
\def\ah{\hat{a}}
\def\bh{\hat{b}}
\def\ch{\hat{c}}
\def\dh{\hat{d}}
\def\eh{\hat{e}}
\def\fh{\hat{f}}
\def\gh{\hat{g}}
\def\hh{\hat{h}}
\def\ih{\hat{i}}
\def\jh{\hat{j}}
\def\kh{\hat{k}}
\def\lh{\hat{l}}
\def\mh{\hat{m}}
\def\nh{\hat{n}}
\def\oh{\hat{o}}
\def\ph{\hat{p}}
\def\qh{\hat{q}}
\def\rh{\hat{r}}
\def\sh{\hat{s}}
\def\th{\hat{t}}
\def\uh{\hat{u}}
\def\vh{\hat{v}}
\def\wh{\hat{w}}
\def\xh{\hat{x}}
\def\yh{\hat{y}}
\def\zh{\hat{z}}
\def\i{\item}
\def\ds{\displaystyle}
\def\n{\noindent}
\def\b{\bullet}
\def\bega{\begin{array}}
\def\enda{\end{array}}
\def\d{ {\rm d} }
\author{Monica Motta}
\address{M. Motta, Dipartimento di Matematica,
Universit\`a di Padova\\ Via Trieste, 63, Padova  35121, Italy\\
Telefax (39)(49) 827 1499,\,\, Telephone (39)(49) 827 1368
email\,
motta@math.unipd.it}

\author{Caterina Sartori}
\address{C. Sartori, Dipartimento di Matematica,
Universit\`a di Padova\\ Via Trieste, 63, Padova  35121, Italy\\Telefax (39)(49) 827 1499,\,\, Telephone (39)(49) 827 1318
email\,sartori@math.unipd.it}
\date{\today}

\begin{abstract} 
We consider a control system   with  dynamics   which are  affine in the  (unbounded) derivative of the control $u$.   We introduce a notion of  generalized solution  $x$  on $[0,T]$ for controls $u$ of   bounded  total variation  on $[0,t]$ for every $t<T$, but  of  possibly infinite   variation  on $[0,T]$.  This solution  has a simple representation formula based on  the so-called graph completion approach, originally developed for BV controls.
  We  prove the well-posedness of this  generalized solution  by showing that   $x$ is a limit solution, that is  the pointwise limit of regular trajectories of the system. In particular, we single out  the subset of limit solutions  which is in one-to-one correspondence with the set of  generalized solutions.  
The controls that we consider   provide  the natural setting   for treating    some questions on  the controllability of the system and some  optimal control problems with  endpoint constraints  and lack of coercivity.  
 \end{abstract}
\subjclass[2010]{49N25, 34H05, 93C10, 49K40, 49J15 } 
\keywords{  Impulsive control, generalized solutions, well posedness,  optimal control with unbounded variation}
\maketitle 
\section*{Introduction} 
 We consider a control system  of the form
\bel{E}
 \dot x (t)= g_0(x(t),u(t),v(t)) +  \sum_{i=1}^m  {g}_i(x(t),u(t))\dot u_i(t),\quad t\in]0,T], 
 \eeq
\bel{EO}
 x(0)=\xb,\quad u(0)=\ub,
\eeq
where $x\in\R^n$ and the measurable control pair $(u,v)$   ranges over a compact set $U\times V\subset\R^m\times\R^q$.   Due to the presence of the derivatives $\dot u_i$, (\ref{E}) is a so-called impulsive  control system,  where a solution $x$ can be  provided by the usual  Carath\'eodory solution only if $u$ is an absolutely continuous  control. For less regular $u$,  several concepts of solutions have been introduced in the literature, either for   commutative  systems, where the Lie brackets $[({\bf e}_i,g_i),({\bf e}_j,g_j)]=0$ for all $i,j=1,\dots,m$  (see e.g.  \cite{BR1}, \cite{D}, \cite{Sa}, \cite{AR}), or assuming  that  $u$ and $x$ are  functions of bounded variation, when the Lie Algebra is non trivial (see e.g. \cite{BR},  \cite{MR}). These  solutions  are described by different authors in fairly equivalent ways, and we will refer to them as graph completion solutions, since they are obtained  completing the graph of $u$ (see e.g., \cite{Ri}, \cite{Wa}, \cite{GS},\cite {KDPS}, \cite{SV},  \cite{WS},   \cite{AKP}, \cite{K},  \cite{PS},   \cite{MS}, \cite{BP}, \cite{MiRu}, for numerical approximations \cite{CF}, for extensions to stochastic control \cite{MS1}, \cite{DMi}).
 In the less studied noncommutative case  with  controls $u$ of unbounded variation, let us  mention the notion of  looping controls \cite{BR2},   the  definition of  limit solution  \cite{AR}, and the theory of rough paths  (for continuous  $u$)   \cite{LQ}.
Differently from the cases of commutative systems and of bounded variation controls $u$, in the general case   no   (simple) explicit representation formula  of the solution  is  known.  

\vsm
In this paper  we focus on the noncommutative case for  controls $u:[0,T]\to U$ with total variation bounded on $[0,t]$ for every $t<T$ but possibly infinite on $[0,T]$, in short  $u\in \ol{BV}_{loc}(T)$.
 We   extend the graph completion approach to such controls  and for any $u\in \ol{BV}_{loc}(T)$  and measurable $v$,  we introduce a notion of     solution $x$  to \eqref{E}--\eqref{EO} on $[0,T]$,  which we  call  {\it {\rm BV}$_{loc}$ graph completion  solution}  (see Definitions \ref{GCloc}, \ref{GClocS}).
In particular,  we first  define an {\it {\rm AC}$_{loc}$ solution}  $x$ on $[0,T]$, obtained by extending  $(x,u)$ to be absolutely continuous on $[0,t]$ for $t<T$   to $[0,T]$,  by choosing $(x,u)(T)=\lim_j(x,u)(\tau_j)$ for some sequence $\tau_j\nearrow T$.   Hence we prove that the concept of   {\it {\rm BV$_{loc}$} graph completion  solution}  $x$ is:
\vsm
i)   {\it well defined}, since for any $u\in\ol{BV}_{loc}(T)$ and measurable $v$ a corresponding  a  BV$_{loc}$ graph completion solution does exist (Theorem \ref{Egc});

ii)  {\it consistent} with that of  AC$_{loc}$ solution, in the sense that  if the pair $(x,u)$  is absolutely continuous on $[0,t]$ for $t<T$ 
and $x$   is a  BV$_{loc}$ graph completion solution, then $x$ is an AC$_{loc}$ solution (Theorem \ref{TST1});

iii) {\it well posed}, since  $x$ is the pointwise limit of  Carath\'eodory solutions $x_k$ to \eqref{E}, \eqref{EO} corresponding to inputs $(u_k,v)$, with  the controls $u_k$   absolutely continuous on $[0,T]$ and  pointwisely converging  to $u$. In this sense it is a  simple limit solution, as  recently defined in \cite{AR}  (see Definition \ref{edsdef}).  Actually, in Theorem \ref{TST2} we prove something more, in that  we characterize the specific subclass of  simple limit solutions, that we call  {\it {\rm BV$_{loc}S$} limit solutions},  corresponding to   BV$_{loc}$ graph completion  solutions. 
 
\vsm
 With respect to more general concepts,  the BV$_{loc}$ graph completion  solution  has  a nice representation formula, suitable to derive  necessary and sufficient optimality conditions for several optimization problems,   both in terms of  Pontrjagin Maximum Principle and of  Hamilton-Jacobi-Bellman equations (some results  in the last direction have been already obtained in \cite{MS2}).     Moreover, 
controls $u\in \ol{BV}_{loc}(T)$  are  relevant  in controllability issues, like approaching a target set,  and  in  optimization problems with endpoint constraints and certain  running costs lacking coercivity, as  in the following example  (see also Example \ref{EIntr}).
 
\begin{Example}  {\rm Let $\C\subset\R^n\times U$ be a closed subset, the {\it target},  and  let ${\bf d}(\cdot)$ denote  the Euclidean distance from $\C$.  Let us minimize
 \bel{value} 
  \int_0^{T} [\ell_0(x(t),u(t),v(t))+\ell_1(x(t),u(t))\,|\dot u|]\, dt, 
 \eeq
over  trajectory-control pairs $(x,u,v)$ of \eqref{E}, \eqref{EO} such that
\bel{IntrEPC}
{\bf d}((x(t),u(t)))>0 \ \forall t<T, \quad \liminf_{t\to {T}^-}{\bf d}((x(t),u(t)))=0,
\eeq
assuming that  $\ell_0\ge0$ and $\ell_1$ verifies  
$$
 \ell_1(x,u)\ge c({\bf d}(x,u)),  
$$
for some strictly increasing, continuous function  $c:\R_+\to\R_+$ with $c(0)\ge0$.  In this case,  only controls $u\in \ol{BV}_{loc}(T)$ may have finite cost. The above hypothesis on  $\ell_1$ generalizes the so-called {\it weak coercivity condition} $\ell_1\ge C_1>0$, assumed in many applications in order to rule out controls with unbounded variation.  Notice that,  as the variation of  $u$ is unbounded,  we expect chattering phenomena  as $t$ tends to $T$ (see e.g.  \cite{CGPT} and the references therein), which in impulsive control systems will affect  both   $u$ and   $x$. It is thus natural to replace the usual endpoint condition  $(x(T),u(T))\in\C$   with \eqref{IntrEPC} (see   Remark \ref{Remmin}).   }
 \end{Example} 
 
The paper is organized as follows. 
We end this section with some notation and the precise  assumptions that are needed in the paper.  In Section \ref{prel} we define  AC$_{loc}$ solutions and introduce  the notion  of BV$_{loc}$ graph completion solution.   Existence of  such   solutions   and their consistency  with regular,  AC$_{loc}$ solutions are established  in Section \ref{SsTh}.  In Section \ref{SLS} we define BV$_{loc}S$ limit solutions  and in Section \ref{ASS}  we obtain our main result:  the equivalence between  BV$_{loc}$ graph completion solutions and  BV$_{loc}S$ limit solutions.  Section \ref{proofs} is devoted to the proofs of some technical results.
 
\subsection{Notation}  Let  $E\subset \R^N$. For any $f:[a,b]\to E$,  $Var_{[a,b]}(f)$   denotes the (total) variation of $f$ on $[a,b]$. When $E$ is bounded, we call {\it diameter} of $E$ the value diam$(E):=\sup\{|u_1-u_2|: \ u_1, \, u_2 \in E\}$. 
  For $T>0$, let $AC([0,T],E)$, $BV([0,T],E)$ denote the set of absolutely continuous and BV   functions \newline $f:[0,T]\to E$, respectively,  and let   us set 
$$\begin{array}{l}
AC_{loc}([0,T[,E):=
\{f\in AC([0,t],E)   \,\forall \,t<T, \ \lim_{t\to T}Var_{[0,t]}[f]\le+\infty\}, 
\end{array}
$$
$$\begin{array}{l}
 BV_{loc}([0,T[,E):=
\{f\in BV([0,t],E)   \, \forall t<T , \   \lim_{t\to T}Var_{[0,t]}[f]\le+\infty \}.
 \end{array}
$$
The set $L^1([0,T], E)$ is the usual quotient with respect to the Lebesgue measure. 

\noindent  When no confusion on the codomain may arise, in what follows   in place of the above sets we will simply write $AC(T)$, $BV(T)$, ${AC}_{loc}(T)$, ${BV}_{loc}(T)$, and  $L^1(T)$, respectively.

We set  $\R_+:=[0,+\infty[$ and  call   {\it modulus } (of continuity) any increasing, continuous function $\omega:\R_+\to\R_+$ such that $\omega(0)=0$ and  $\omega(r)>0$ for every $r>0$.

\subsection{Assumptions} Throughout the paper we assume the following hypotheses:
\begin{itemize}
 \item[(i)]
the sets $U \subset \R^m$  and $V \subset \R^l$ are compact;
 \item[(ii)] the control vector field $g_0: \R^n \times U \times V\to\R^n$  is continuous and 
 $ (x,u) \mapsto  g_0(x,u,v)$
 is locally Lipschitz on $\R^n \times U$,  uniformly in $v \in V$;
  \item[(iii)] for each $i=1,\dots,m$  the control vector field  $g_i: \R^n \times U\to\R^n$  is locally Lipschitz continuous;
  \item[(iv)] there exists $M>0$ such that
  $$
  \left| \Big(g_0(x,u,v),g_1(x,u),\dots,g_m(x,u)\Big)\right| \leq  M(1+|(x,u)|),$$
  for every $(x,u,v) \in \R^n\times U\times V$.
  \end{itemize}
 
In the main results  we will use  the following condition. 
\begin{Definition}[Whitney property]\label{W}  
A  compact subset  $U\subset \R^m$ has the Whitney property if there is some $C\ge1$ such that for every pair $u_1$, $u_2 \in U$, there exists an absolutely continuous path $\tilde u: [0, 1] \to U$ verifying
\bel{hCA}
\tilde u(0)=u_1, \quad \tilde u(1)=u_2, \quad Var[\tilde u]\le C|u_1-u_2|.
\eeq
\end{Definition}

For instance,  compact, star-shaped sets verify the Whitney property.

\vv
\section{BV$_{loc}$ graph completion solutions}\label{prel}
 For any control $(u,v)\in {AC}_{loc}(T)\times  {L^1}(T)$ with $u(0)=\ub$,   let
$$x=x[\xb,\ub,u,v]$$
denote the unique   Carath\'eodory solution to   (\ref{E})--(\ref{EO}), defined on $[0,T[$. 
 
\vsm
\subsection{AC$_{loc}$ controls and solutions}\label{SsACloc}

Let us introduce the set of controls $u\in{AC}_{loc}(T)$ {\it extended to $[0,T]$}:
\bel{barloc}
\ol{AC}_{loc}(T):=\left\{u\in {AC}_{loc}(T): \  u(T):=\lim_ju(\tau_j), \ \text{for some} \  \tau_j\nearrow T\right\} 
\eeq
and the corresponding extended solutions: 
  
 \begin{Definition}[AC$_{loc}$  solution]\label{DACloc} Let $(u,v)\in \ol{AC}_{loc}(T)\times L^1(T)$ with $u(0)=\ub$, and set  $x:=x[\xb,\ub,u,v]$.  When $x$ is bounded on $[0,T[$, we introduce an {\rm extension of $x$ to $[0,T]$,}   such that   
\bel{xuset}
 (x(T),u(T))\in(x,u)_{set}(T):=\{\lim_j(x,u)(\tau_j), \ \text{for some} \  \tau_j\nearrow T\}.  
\eeq 
We call  $x$ a {\rm (single-valued)  AC$_{loc}$ solution  on $[0,T]$} and $(x,u,v)$ an {\rm AC$_{loc}$ trajectory-control pair}.   
\end{Definition}
Clearly, the extension of $(x,u)$ to $[0,T]$ is not unique, in general.
\begin{Remark}\label{Remmin}{\rm In order to motivate the above extension,  let
us consider AC$_{loc}$  trajectory-control pairs $(x,u,v)$ defined on $[0,T]$ as above,   verifying the final constraint 
\bel{TC}
(x,u)(T)\in \C,
\eeq  
 where  $\C\subset\R^n\times U$ is a closed set, which we call the {\it target}. Condition \eqref{TC} turns out to be verified when  $(x,u)_{set}(T)\cap\C\ne\emptyset$ and this is  equivalent to have
$$
\liminf_{t\to T^-} d\left((x(t),u(t)),\C\right)=0.
$$
Incidentally,   the stronger condition $(x,u)_{set}(T)\subseteq\C$ is instead equivalent to  
\bel{sTC}
\lim_{t\to T^-} d\left((x(t),u(t)),\C\right)=0 
\eeq
and this limit  holds true if and only if for {\it every} increasing sequence $(\tau_j)_j$ converging to $T$ there exists a subsequence such that  $\lim_{j'}(x(\tau_{j'}),u(\tau_{j'}))=(\bar x,\bar u)\in \partial\C$.   Definition \ref{DACloc} can be easily adapted to applications where \eqref{TC} has to be interpreted as in \eqref{sTC}. }

\end{Remark}

 \vsm
\subsection{Space-time controls and solutions}\label{SsST}
 
For $L>0$ and $0<S\le+\infty$,    let ${\mathcal U}_L(S)$  denote the subset of $L$-Lipschitz  maps 
 $$(\varphi_0,\varphi):[0,S[\to \R_+\times U,$$ 
 such  that $\varphi_0(0)=0$, and $\varphi_0'(s) \geq 0$, $\varphi_0'(s)+|\varphi' (s)|\le L$ 
for   almost every  $s\in[0,S[$;   the apex  $'$  denotes differentiation with respect to the  {\it pseudo-time} $s$. Let ${\mathcal M}(S)$  denote the set of measurable functions  $\psi:[0,S[\to V$.

\begin{Definition}  [Space-time control and solution]   \label{STdef}
We   will  call  {\em space-time controls} the elements $(\varphi_0,\varphi,\psi, S)$, where $0<S\le+\infty$ and  $(\varphi_0,\varphi,\psi)$ belongs to the   set $\bigcup_{L>0}{\mathcal U}_L(S)\times {\mathcal M}(S)$.
\newline  Given  $(\xb,\ub)\in \R^n\times U$ and a space-time control $(\varphi_0,\varphi,\psi,S)$ such that $\varphi(0)=\ub$, the {\rm space-time control system}  is defined by
\bel{SEaff}
\left\{
\begin{array}{l}
\xi'(s)  =  g_0(\xi(s),\varphi(s),\psi(s)) \varphi_0'(s)+ \sum_{i=1}^m {  g}_i(\xi(s),\varphi(s)) {\varphi'_i}(s)  \quad s\in]0,S[, \\\,\\
 \xi(0)=\xb.
\end{array}
\right.
\eeq
 We will  write  $\xi [\xb,\ub,\varphi_0,\varphi,\psi]$ to denote  the solution of  \eqref{SEaff}. 
 \end{Definition}
 
Space-time controls and solutions can be seen as an {\it extension} of regular, that is  AC  and  AC$_{loc}$,   controls and solutions. Indeed, if  instead of a   control pair $(u,v)\in \ol{AC}_{loc}(T)\times L^1(T)$ we consider any  time-reparametrization $t=\varphi_0(s)$ of its graph $(t,u(t),v(t))$, we obtain a space-time control $(\varphi_0,\varphi,\psi):=(\varphi_0, u\circ\varphi_0,v\circ\varphi_0)$ \footnote{Since  every $L^1$ equivalence class contains  Borel measurable representatives, here and in the sequel we  tacitly assume that the maps $v$ and $\psi$ are Borel measurable when necessary.} and the corresponding space-time solution  $\xi [\xb,\ub,\varphi_0,\varphi,\psi]$ is nothing but  $x[\xb,\ub,u,v]\circ\varphi_0$.  On the other hand, space-time controls $(\varphi_0,\varphi,\psi)$ such that $(\varphi,\psi)$  evolves  on the intervals where $\varphi_0$ is constant, are  more general objects than the graphs of a control   $(u,v)$  with $u$ in $AC(T)$ or in $\ol{AC}_{loc}(T)$  (see Proposition \ref{REmb} and Theorem \ref{ThAC}).

\noindent In addition,  the space-time system has a {\it  parameter-free character}.  Precisely, if $(\varphi_0,\varphi,\psi,S)$, $(\tilde\varphi_0,\tilde\varphi,\tilde\psi,\tilde S)$ verify  $(\varphi_0,\varphi,\psi)=(\tilde\varphi_0,\tilde\varphi,\tilde\psi)\circ \tilde s$  for some reparametrization $\tilde s:[0,  S]\to[0,\tilde S]$,  it can be  shown that $\xi=\tilde\xi\circ\tilde s$, if $\xi$ and $\tilde\xi$ denote the solutions to (\ref{SEaff}) corresponding to $(\varphi_0,\varphi,\psi)$ and $(\tilde\varphi_0,\tilde\varphi,\tilde\psi)$, respectively.   For these reasons, we  consider the following subset of space-time controls.    

 \begin{Definition}[Feasible space-time controls]\label{fstc}   We   call  {\rm feasible} the space-time controls belonging to the subset 
\bel{fc}
\begin{array}{l}
\U(T;\ub):= 
\left\{(\varphi_0,\varphi,\psi, S): \  0<S\le+\infty, \ (\varphi_0,\varphi,\psi)\in {\mathcal U}_1(S)\times {\mathcal M}(S),\qquad
\qquad\qquad
\qquad\qquad
\qquad\right.\\  \,\\\left. \
\qquad
\qquad\, 
\qquad \varphi_0'(s)+|\varphi' (s)|=1 \text{ a.e.},   \ \  \varphi(0)=\ub, \ \ \lim_{s\to S}\varphi_0(s)=T\right\}.
\end{array}
\eeq
\end{Definition}
 
For any feasible  space-time control $(\varphi_0,\varphi,\psi, S)$,  the pseudo-time  $s$  coincides with  the {\it arc-length parameter } of the curve $(\varphi_0,\varphi)$ (with respect to the norm $\varphi_0'(s)+|\varphi' (s)|$)  and we have the identity 
\bel{ids}
s=\varphi_0(s)+Var_{[0,s]}[\varphi] \quad \forall s\in[0,S[.
\eeq
As a consequence,  the final pseudo-time is $S=T+Var_{[0,S[}[\varphi]$ and  
$$
S=+\infty \ \text{ if and only if } \  Var_{[0, S[}[\varphi]=+\infty. 
$$

Let us introduce the following notion of {\it feasible space-time trajectory-control pair}     extended to  the {\it closed} set $[0,S]$, even in case $S=+\infty$.   
  \begin{Definition} [Feasible space-time trajectory-control pairs]\label{fsttc}  Let  $(\varphi_0,\varphi,\psi, S)\in \U(T;\ub)$ be a feasible space-time control and set $\xi:=\xi [\xb,\ub,\varphi_0,\varphi,\psi]$. If $S<+\infty$, we extend  $(\xi,\varphi_0,\varphi)$ to $[0,S]$ by continuity.  If $S=+\infty$  and  $\xi$   is bounded, we introduce an {\rm  extension of $(\xi,\varphi)$ to $[0,+\infty]$,} such that
  \bel{infty}
    (\xi,\varphi)(+\infty)\in(\xi,\varphi)_{set}(+\infty):=\{\lim_j(\xi,\varphi)(s_j):  
  \  \text{for some  \ $s_j\nearrow+\infty$ } \},
  \eeq
and call $(\xi,\varphi_0,\varphi,\psi, S)$ a {\rm (single-valued) feasible space-time trajectory-control pair on $[0,S]$.} 
 \end{Definition}
 
The next results are easy consequences of the chain rule.
\begin{Proposition}\label{REmb}   
(i) \,  Given $(u,v)\in \ol{AC}_{loc}(T)\times L^1(T)$ with $u(0)=\ub$,   set $x:=x[\xb,\ub,u,v]$ and
\bel{Rep}
\begin{array}{l}
 \sigma(t) {:=}  \int_0^t (1 +|\dot u(\tau)|) d\tau \quad\forall t\in[0,T[, \  \ S:=\lim_{t\to T}\sigma(t) \ (\le+\infty) \\ \, \\
\varphi_0{:=}  \sigma^{-1},\ \ \varphi{:=} u\circ \varphi_0,  \ \ \psi{:=}  v \circ \varphi_0 ,  \ \xi:=\xi[\xb,\ub,\varphi_0,\varphi,\psi]   \ \ \text{in }  \ [0,S[.
\end{array}
\eeq
Then   $(\xi,\varphi_0,\varphi,\psi, S)$ is a feasible space-time trajectory,  $(\xi,\varphi,\psi)\circ\sigma=(x,u,v)$ and, when $u\in \ol{AC}_{loc}(T)\setminus AC(T)$ (so that $S=+\infty$) and $x$ is bounded,   $(\xi,\varphi)_{set}(+\infty)=(x,u)_{set}(T)$. In particular,  if  $(x,u)(T) =\lim_j(x,u)(\tau_j)$ for some $\tau_j\nearrow T$,     we have $\lim_j \sigma(\tau_j)=+\infty$ and  we can set
 $$
(\xi,\varphi)(+\infty):=\lim_j(\xi,\varphi)(\sigma(\tau_j))=(x,u)(T).
 $$
  
\noindent (ii) Vice-versa, given $(\varphi_0,\varphi,\psi, S)\in \U(T;\ub)$   with 
$$
\varphi_0'(s)>0 \ \text{for a.e. } \ s\in[0,S[,
$$
let us set $\xi:=\xi[\xb,\ub,\varphi_0,\varphi,\psi]$ and
$$
 \quad (u,v):=( \varphi,\psi)\circ\varphi_0^{-1}, \quad x:=x[\xb,\ub,u,v].
 $$

\noindent  Then $(x,u,v)$ is a  trajectory-control pair of \eqref{E}--\eqref{EO},    $(x,u,v)\circ\varphi_0=(\xi,\varphi,\psi)$,  and, when  $S=+\infty$ and $\xi$ is bounded,   $(x,u)_{set}(T)=(\xi,\varphi)_{set}(+\infty)$. In particular,    if $(\xi,\varphi)(+\infty)=\lim_j(\xi,\varphi)(s_j)$ along some   $s_j\nearrow+\infty$, we have    $\lim_j\varphi_0(s_j)=T$ and we can set
$$
(x,u)(T):=\lim_j(x,u)(\varphi_0(s_j))=(\xi,\varphi)(+\infty).
$$
  \end{Proposition}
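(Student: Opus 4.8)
The plan is to reduce everything to the elementary change-of-variables and chain-rule formulas for the composition of a (locally) absolutely continuous function with a strictly monotone absolutely continuous reparametrization, the one delicate point being that such a reparametrization must not have derivative vanishing on a set of positive measure — which is automatic in part~(i) because $\sigma'=1+|\dot u|\ge1$ a.e., and is exactly the extra hypothesis $\varphi_0'>0$ a.e.\ imposed in part~(ii).

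For part~(i) I would first record that $\sigma$ in \eqref{Rep} is absolutely continuous on each $[0,t]$, $t<T$, strictly increasing since $\sigma'\ge1$ a.e., with $\sigma(0)=0$, hence a homeomorphism of $[0,T[$ onto $[0,S[$; its inverse $\varphi_0=\sigma^{-1}$ is then $1$-Lipschitz, strictly increasing, $\varphi_0(0)=0$, $\lim_{s\to S}\varphi_0(s)=T$. Differentiating $\varphi=u\circ\varphi_0$ and using $\varphi_0'(s)=1/\sigma'(\varphi_0(s))=1/(1+|\dot u(\varphi_0(s))|)$ gives $\varphi_0'\ge0$ and $\varphi_0'+|\varphi'|=1$ a.e., while $\varphi(0)=\ub$, so $(\varphi_0,\varphi,\psi,S)\in\U(T;\ub)$. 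Next I would set $y:=\xi\circ\sigma$ and apply the chain rule to \eqref{SEaff} at $s=\sigma(t)$, using $\varphi(\sigma(t))=u(t)$, $\psi(\sigma(t))=v(t)$, $\varphi_0'(\sigma(t))=1/\sigma'(t)$ and $\varphi_i'(\sigma(t))=\dot u_i(t)/\sigma'(t)$; this yields $\dot y=g_0(y,u,v)+\sum_i g_i(y,u)\dot u_i$ with $y(0)=\xb$, so by uniqueness of the Carath\'eodory solution $y=x$ on $[0,T[$, i.e.\ $(\xi,\varphi,\psi)\circ\sigma=(x,u,v)$ and $\xi=x\circ\varphi_0$, which is bounded exactly when $x$ is. For the limit statement, when $u\in\ol{AC}_{loc}(T)\setminus AC(T)$ one has $Var_{[0,T[}[u]=+\infty$, hence $\sigma(t)=t+Var_{[0,t]}[u]\to+\infty$, i.e.\ $S=+\infty$; then for any $s_j\nearrow+\infty$ the points $\tau_j:=\varphi_0(s_j)\nearrow T$ and for any $\tau_j\nearrow T$ the points $s_j:=\sigma(\tau_j)\nearrow+\infty$ satisfy $(\xi,\varphi)(s_j)=(x,u)(\tau_j)$, so the sets defined in \eqref{infty} and \eqref{xuset} coincide, and the concluding displayed identity is the special case $s_j=\sigma(\tau_j)$.

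For part~(ii) I would argue symmetrically. Feasibility gives that $\varphi_0$ is a $1$-Lipschitz increasing homeomorphism of $[0,S[$ onto $[0,T[$, and the hypothesis $\varphi_0'>0$ a.e., together with the identity $|\varphi_0(A)|=\int_A\varphi_0'\,ds$ valid for Borel $A$ (here $\varphi_0$ is strictly increasing, hence a homeomorphism, so the pushforward of Lebesgue measure by $\varphi_0$ has density $\varphi_0'$), shows that $\varphi_0$ maps no positive-measure set onto a null set; hence $\varphi_0^{-1}$ is absolutely continuous on each $[0,t]$, $t<T$, the functions $u=\varphi\circ\varphi_0^{-1}$, $v=\psi\circ\varphi_0^{-1}$ are well defined with $u\in AC_{loc}(T)$, $u(0)=\ub$, $u([0,T[)\subseteq U$, $v\in L^1(T)$, and the chain rule $(x\circ\varphi_0)'(s)=\dot x(\varphi_0(s))\varphi_0'(s)$ holds a.e.\ (the preimage under $\varphi_0$ of the null set where $\dot x$ is undefined is null, again by $\varphi_0'>0$ a.e.). Setting $y:=x\circ\varphi_0$ and substituting $\dot x$ from \eqref{E} as above then shows $y$ solves \eqref{SEaff} with $y(0)=\xb$, so $y=\xi$ by uniqueness, i.e.\ $(x,u,v)\circ\varphi_0=(\xi,\varphi,\psi)$; the assertions on $(x,u)_{set}(T)$, $(\xi,\varphi)_{set}(+\infty)$ and the final identity then follow as in part~(i) with the roles of $\sigma$ and $\varphi_0$ interchanged (and $x=\xi\circ\varphi_0^{-1}$ bounded whenever $\xi$ is).

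The part I expect to be the main obstacle — the only step beyond routine bookkeeping — is justifying the absolute continuity of the inverse reparametrization and the consequent a.e.\ validity of the chain rule for the composition of a locally absolutely continuous function with $\varphi_0$ (resp.\ $\sigma$): everything rests on the fact that a strictly increasing absolutely continuous function with a.e.\ positive derivative carries positive-measure sets to positive-measure sets, which is precisely what the hypotheses $\varphi_0'>0$ a.e.\ in (ii) and $\sigma'\ge1$ in (i) provide.
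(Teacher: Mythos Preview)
Your proposal is correct and is precisely the approach the paper indicates: the paper does not give a detailed proof of this proposition, stating only that it is an ``easy consequence of the chain rule,'' and what you have written is a careful fleshing-out of exactly that argument. Your identification of the one nontrivial point --- that the chain rule for the composition requires the reparametrization to carry positive-measure sets to positive-measure sets, guaranteed by $\sigma'\ge1$ in (i) and by the explicit hypothesis $\varphi_0'>0$ a.e.\ in (ii) --- is accurate and is indeed why the hypothesis in (ii) is imposed.
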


Owing to Proposition \ref{REmb} we can identify any AC$_{loc}$  trajectory-control pair with the associated feasible space-time trajectory-control pair:
\begin{Definition}[Arc-length parametrization]\label{Dal}   We call {\rm arc-length   graph- \linebreak parametrization} of an AC$_{loc}$  trajectory-control pair $(x,u,v)$ the feasible space-time trajectory-control pair  $(\xi,\varphi_0,\varphi,\psi, S)$ defined by \eqref{Rep}. 
\end{Definition}

Proposition \ref{REmb}  also  implies  the following equivalence result.
\begin{Theorem}\label{ThAC}
The set  of {\rm AC}  [resp., {\rm AC}$_{loc}\setminus$ {\rm AC}] trajectory-control pairs of   \eqref{E}-\eqref{EO}   is in one-to-one correspondence with the subset of feasible space-time trajectory-control pairs $(\xi,\varphi_0,\varphi,\psi, S)$ with  $S<+\infty$  [resp.,  $S=+\infty$] and $\varphi'_0>0$ a.e..   
\end{Theorem}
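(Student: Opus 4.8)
The plan is to establish the bijection as a direct corollary of Proposition \ref{REmb}, by checking that the two constructions it provides—the arc-length graph-parametrization from part (i) and the inverse reparametrization from part (ii)—are mutually inverse and respect the stated subclasses. First I would fix an $\mathrm{AC}$ or $\mathrm{AC}_{loc}\setminus\mathrm{AC}$ trajectory-control pair $(x,u,v)$ of \eqref{E}--\eqref{EO} with $u(0)=\ub$, and apply Proposition \ref{REmb}(i) to obtain $(\xi,\varphi_0,\varphi,\psi,S)$ via \eqref{Rep}. The map $\varphi_0=\sigma^{-1}$ is well defined because $\sigma(t)=\int_0^t(1+|\dot u(\tau)|)\,d\tau$ is strictly increasing and absolutely continuous, so $\varphi_0$ is Lipschitz with $\varphi_0'(s)=(1+|\dot u(\varphi_0(s))|)^{-1}>0$ a.e.; moreover $\varphi_0'+|\varphi'|=1$ a.e.\ by the chain rule, and $\lim_{s\to S}\varphi_0(s)=T$, so $(\xi,\varphi_0,\varphi,\psi,S)\in\U(T;\ub)$ with $\varphi_0'>0$ a.e.. The dichotomy $S<+\infty\iff u\in\mathrm{AC}(T)$ versus $S=+\infty\iff u\in\mathrm{AC}_{loc}(T)\setminus\mathrm{AC}(T)$ follows from the identity $S=T+Var_{[0,S[}[\varphi]=T+Var_{[0,T[}[u]$ noted after \eqref{ids}.

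Next I would run the converse: starting from any feasible space-time trajectory-control pair $(\xi,\varphi_0,\varphi,\psi,S)$ with $\varphi_0'>0$ a.e., Proposition \ref{REmb}(ii) produces $(u,v)=(\varphi,\psi)\circ\varphi_0^{-1}$ and $x=x[\xb,\ub,u,v]$, a trajectory-control pair of \eqref{E}--\eqref{EO} with $(x,u,v)\circ\varphi_0=(\xi,\varphi,\psi)$. One checks $u\in AC_{loc}(T)$ (indeed $u\in AC(T)$ precisely when $S<+\infty$) using $Var_{[0,t]}[u]=Var_{[0,\varphi_0^{-1}(t)]}[\varphi]\le\varphi_0^{-1}(t)<+\infty$ for $t<T$, and $u(0)=\varphi(0)=\ub$. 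Then I would verify the two round-trips: applying (i) to the $(x,u,v)$ just obtained gives $\sigma=\varphi_0^{-1}$ by \eqref{ids}, hence recovers $(\xi,\varphi_0,\varphi,\psi,S)$; and applying (ii) to the output of (i) recovers $(x,u,v)$ because $\varphi_0^{-1}=\sigma$ and $(\varphi,\psi)\circ\sigma=(u,v)$ as stated in Proposition \ref{REmb}(i). For the $\mathrm{AC}_{loc}\setminus\mathrm{AC}$ case one must also check that the boundary extensions match: by Proposition \ref{REmb} the sets $(x,u)_{set}(T)$ and $(\xi,\varphi)_{set}(+\infty)$ coincide, and a chosen value $(x,u)(T)=\lim_j(x,u)(\tau_j)$ corresponds under $s=\sigma(\tau_j)$ to the value $(\xi,\varphi)(+\infty)=\lim_j(\xi,\varphi)(s_j)$, so the identification descends to the single-valued extended pairs of Definitions \ref{DACloc} and \ref{fsttc}.

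The only genuinely delicate point—and the one I would spend the most care on—is the well-definedness and surjectivity of the reparametrization $\varphi_0^{-1}$ on the space-time side: a feasible control has $\varphi_0'\ge 0$ only, so $\varphi_0$ need not be injective, which is exactly why the hypothesis $\varphi_0'>0$ a.e.\ is imposed in the statement. Even with $\varphi_0'>0$ a.e., $\varphi_0$ is merely Lipschitz and strictly increasing but its inverse $\varphi_0^{-1}$ need not be Lipschitz; one must argue that $\varphi_0^{-1}$ is at least absolutely continuous so that $u=\varphi\circ\varphi_0^{-1}$ lies in $AC_{loc}(T)$ and the chain-rule manipulations of \eqref{E} and \eqref{SEaff} are justified. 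This is where I would invoke the chain rule carefully (as the paper says ``easy consequences of the chain rule''): since $\varphi_0$ is strictly increasing and Lipschitz with $\varphi_0'>0$ a.e., for every $t<T$ the restriction $\varphi_0:[0,\varphi_0^{-1}(t)]\to[0,t]$ is a bijection whose inverse has bounded variation equal to $\varphi_0^{-1}(t)$, and absolute continuity of $\varphi_0^{-1}$ on $[0,t]$ follows because $\varphi_0$ maps null sets to null sets and $(\varphi_0^{-1})'=1/\varphi_0'\circ\varphi_0^{-1}$ is integrable. Everything else—matching initial data, the ODE identities, the variation bookkeeping—is bookkeeping already carried out in Proposition \ref{REmb} and the discussion around \eqref{ids}, so the proof reduces to assembling these pieces and noting the two maps are mutually inverse.
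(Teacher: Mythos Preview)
Your proposal is correct and follows exactly the route the paper intends: the paper offers no separate proof of Theorem~\ref{ThAC} beyond the sentence ``Proposition~\ref{REmb} also implies the following equivalence result,'' and you have simply made explicit the verification that the maps in parts (i) and (ii) of Proposition~\ref{REmb} are mutually inverse and respect the $S<+\infty$ / $S=+\infty$ dichotomy. The one technical point you single out---absolute continuity of $\varphi_0^{-1}$ under the hypothesis $\varphi_0'>0$ a.e.---is indeed the only nontrivial detail; your argument for it is essentially right, though the relevant Lusin-$N$ property is that of $\varphi_0^{-1}$ (preimages under $\varphi_0$ of null sets are null, which follows from $\varphi_0'>0$ a.e.\ combined with the area formula for the AC map $\varphi_0$), not of $\varphi_0$ itself.
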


\subsection{BV$_{loc}$ graph completions}\label{SsGC}
 
 Let us introduce the basic notions of the graph completion approach, which  originally was dealing with inputs $u\in BV(T)$    and that  we now extend to  controls  $u\in \ol{BV}_{loc}(T)$, where
$$
\ol{BV}_{loc}(T):= \{u: \  u:[0,T]\to U, \ \ u\in {BV}_{loc}(T)\}.
$$
 We refer to   \cite{BR} for the definition and some basic results on BV graph completions, to \cite{MR} for  BV graph completions with dependence on the ordinary control $v$ and  to \cite{AR}, \cite{AMR} for  the concept of clock.
\begin{Definition}[Graph completion and clock]\label{GCloc}  Let  $(u,v) \in \ol{BV}_{loc}(T) \times L^1(T)$  and $u(0)=\ub\in U$.  We say that a   space-time control     $(\varphi_0,\varphi,\psi, S)\in\U(T; \ub)$ with $S\le+\infty$,    is a {\em BV$_{loc}$ graph completion   of $(u,v)$}  if 
\begin{itemize} 
\item[i)]  $\forall t\in[0,T[$,  $\exists s\in[0,S[$ such that
$(\varphi_0,\varphi,\psi)(s)=(t,u(t),v(t))$; 
\item[ii)] when $S<+\infty$, $(\varphi_0,\varphi)(S)=(T,u(T))$;
\item[iii)] when  $S=+\infty$,   
\bel{gcu}
 \lim_j\varphi(s_j)=u(T) \quad\text{for some }s_j\nearrow +\infty.
\eeq 
In this case we will write, in short,  $(\varphi_0,\varphi)(+\infty)=(T,u(T))$.
\end{itemize}
   We call a {\rm clock} any increasing function $\sigma:[0,T]\to[0,S]$ such that 
$$
(\varphi_0,\varphi)(\sigma(t))=(t,u(t)) \ \text{ for every $t\in[0,T]$, $\sigma(0)=0$ and $\sigma(T)=S$.} \footnote{When $S=+\infty$, the notation  $\sigma(T)=+\infty$ means just that   $(\varphi_0,\varphi)(+\infty)=(T,u(T))$  in the sense of (\ref{gcu}), but it might be $\lim_{t\to T^-}\sigma(t)<+\infty$.} 
$$
\end{Definition}

 If $(\varphi_0,\varphi,\psi,S)$  is a BV$_{loc}$ graph completion of  a control $(u,v) \in \ol{BV}_{loc}(T) \times L^1(T)$,  then $Var_{[0,T]}[u]\le Var_{[0,S[}[\varphi]$.  Indeed, $(\varphi_0,\varphi)$ is    a parametrization of  a {\it completion} of $(t,u(t))$, where, roughly speaking,  a discontinuity of $u$ at $\bar t$ is  bridged by an arbitrary continuous curve  in $\{\bar t\}\times U$. Therefore, if $S<+\infty$ the control $u$ has necessarily bounded  variation $Var_{[0,T]}[u]\le Var_{[0,S[}[\varphi]$, while when  $S=+\infty$, $Var_{\R_+}[\varphi]=+\infty$ but   the  control  $u$ may belong either to $\ol{BV}_{loc}(T)$ or to BV$(T)$.

 \begin{Definition}[Graph completion solution]\label{GClocS}   Let   $(\varphi_0,\varphi,\psi, S)$ be a  BV$_{loc}$ graph  completion    of $(u,v) \in \ol{BV}_{loc}(T) \times L^1(T)$  with $u(0)=\ub$, let $\sigma$ be a clock    and set $\xi:=\xi[\xb, \ub,\varphi_0,\varphi,\psi]$.  When $S=+\infty$, let us suppose that $\xi$ is bounded.
 
We define a {\em BV$_{loc}$ graph completion  solution }  to \eqref{E}-\eqref{EO}  associated to  $(\varphi_0,\varphi,\psi, S)$ and $\sigma$, a  map  
$$
x:[0,T]\to\R^n, \quad x(t){:=} \xi\circ\sigma(t)\  \ \forall t\in[0,T[,
$$
and
\begin{itemize} 
\item[i)] if $S<+\infty$,  $x(T)=\xi(S)$;
\item[ii)] if  $S=+\infty$,  $(x(T),u(T))\in  (\xi,\varphi)_{set}(+\infty)$ {\rm (see  \eqref{infty}).}
\end{itemize}
\end{Definition}

 Notice that graph completions allow for jumps of the trajectory even at the times $t$ where $u$ is continuous (a loop of u could be considered at these instants, which, owing to the non-triviality of the Lie algebra generated by \linebreak $\{({\bf e}_1,g_1), . . . , ({\bf e}_m,g_m)\}$, might determine a discontinuity in $x$). 
 
 \vsm
 Just for regular  controls $u\in AC_{loc}(T)$ let us consider the following,  more restrictive notion of graph completion,  where essentially 
 the variation added to $u$ by introducing loops is finite; in other words,  the difference between the  variation  of  the  completion of $(t,u(t))$ and that of $u$ is finite. This notion will play an important role in   Theorem \ref{Egc}.
   \begin{Definition}[Graph completion with BV loops]\label{GCc} 
Given a  BV$_{loc}$ graph  completion   $(\varphi_0,\varphi,\psi, S)$  of a control $(u,v) \in \ol{AC}_{loc}(T) \times L^1(T)$, we say that it is a  {\em    graph  completion with {\rm BV} loops} if either $S<+\infty$ or $S=+\infty$ and
\bel{len}
Var_{\R^+}(\varphi_{|\varphi'_0=0})=\text{meas}  \{s\in\R^+: \ \varphi'_0(s)=0\}<+\infty,
\eeq
  \end{Definition}

 For instance,  the arc-length  graph parametrization $(\varphi_0,\varphi,\psi,S)$, $S\le+\infty$   of $(u,v)$  with $u\in AC_{loc}(T)$,  is   a  graph completion with BV loops (actually, with no loops), since $\varphi_0'> 0$ a.e.. On the other hand,  every graph completion $(\varphi_0,\varphi,\psi,S)$ with $S=+\infty$ of a control $(u,v) \in AC(T) \times L^1(T)$  has not $BV$ loops.  

\section{Existence and consistency}\label{SsTh}
This section is devoted to prove the {\it existence} of BV$_{loc}$ graph completion solutions (Theorem \ref{Egc}),  and the {\it consistency} of such notion of solution with the  extended AC$_{loc}$  solutions considered in Subsection \ref{SsACloc} (Theorem \ref{TST1}).

  \begin{Theorem}[Existence]\label{Egc}   Let  $U$ have the Whitney property.  Then for  any  $(u,v)  \in \ol{BV}_{loc}(T)\times L^1(T)$, there exists  a BV$_{loc}$ graph completion $(\varphi_0,\varphi,\psi,+\infty)$, and,  for any clock $\sigma$,  there is an associated  BV$_{loc}$ graph completion  solution $x$ to \eqref{E}--\eqref{EO} on $[0,T]$.  
  \end{Theorem}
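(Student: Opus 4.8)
The plan is to build the graph completion by "sewing" together, on a pseudo-time axis of length $+\infty$, the countably many jumps of $u$ and the continuous part of $u$, and then to solve the space-time ODE \eqref{SEaff} on the resulting interval. First I would record the standard structure of $u\in\ol{BV}_{loc}(T)$: on every $[0,t]$ with $t<T$, $u$ has bounded variation, hence at most countably many jump points $\{t_k\}$, and we may write $Var_{[0,t]}[u]=Var^c_{[0,t]}[u]+\sum_{t_k\le t}|u(t_k^+)-u(t_k^-)|$, splitting into a continuous part and the jump part. Using the arc-length idea of \eqref{Rep}, on the continuous part I reparametrize $t$ by (essentially) $\sigma^c(t):=t+Var^c_{[0,t]}[u]$, which is finite for $t<T$; at each jump time $t_k$ I must insert an absolutely continuous bridge in $\{t_k\}\times U$ from $u(t_k^-)$ to $u(t_k^+)$ (and, if one wants left/right values realized, also from $u(t_k^-)$ to $u(t_k)$ to $u(t_k^+)$). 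This is exactly where the \textbf{Whitney property} enters: by Definition \ref{W} there is $C\ge1$ and an absolutely continuous path in $U$ joining the two endpoints with variation at most $C|u(t_k^+)-u(t_k^-)|$, so the total extra pseudo-length added by all bridges up to time $t$ is at most $C\sum_{t_k\le t}|u(t_k^+)-u(t_k^-)|<+\infty$ for $t<T$. Normalizing each bridge to unit speed in the $\varphi_0'+|\varphi'|$ norm (with $\varphi_0'=0$ on bridges), and doing likewise on the continuous part, I obtain on a pseudo-time interval $[0,S[$ a map $(\varphi_0,\varphi)\in\mathcal U_1(S)$ with $\varphi_0'+|\varphi'|=1$ a.e., $\varphi_0(0)=0$, $\varphi(0)=\ub$, $\lim_{s\to S}\varphi_0(s)=T$, and $\psi:=v\circ(\text{time-change})$ extended arbitrarily (say constantly) on bridges; condition i) of Definition \ref{GCloc} holds by construction, and $S=T+Var^c_{[0,T[}[u]+C\sum_k|u(t_k^+)-u(t_k^-)|$.

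Next I have to check $S=+\infty$, so that condition iii) is the relevant one and the asserted graph completion $(\varphi_0,\varphi,\psi,+\infty)$ genuinely has $S=+\infty$. If $u\notin BV(T)$ this is immediate since $Var_{[0,T[}[u]=+\infty$ forces $S=+\infty$ by \eqref{ids}. If instead $u\in BV(T)$ then the construction above gives finite $S$; in that case I pad the completion by appending, after pseudo-time $S$, a constant tail $\varphi\equiv u(T)$, $\varphi_0\equiv T$ run at unit speed forever — this keeps $(\varphi_0,\varphi)\in\mathcal U_1(\cdot)$, preserves $\varphi_0'+|\varphi'|=1$ a.e. (take $\varphi_0'=1$, $\varphi'=0$ on the tail... wait, that would increase $\varphi_0$; instead take a tail oscillating in $U$ near $u(T)$, or more simply allow the tail to be a constant loop: $\varphi_0'=0$, and $\varphi$ a unit-speed loop based at $u(T)$). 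Either way $\lim_{s\to+\infty}\varphi_0(s)=T$, and \eqref{gcu} holds with $s_j\to+\infty$ and $\varphi(s_j)=u(T)$. Thus in all cases we produce a BV$_{loc}$ graph completion with $S=+\infty$.

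Then, given any clock $\sigma$, the associated solution is obtained by solving \eqref{SEaff}. By assumptions (ii)--(iv), the right-hand side of \eqref{SEaff} is Carath\'eodory with a linear growth bound $|\xi'(s)|\le M(1+|(\xi(s),\varphi(s))|)(\varphi_0'+|\varphi'|)=M(1+|(\xi,\varphi)|)$ a.e.; since $U$ is compact, $|\varphi(s)|$ is bounded, so Gr\"onwall gives an a priori bound on $\xi$ on every $[0,s]$ with $s<\infty$, hence (by Carath\'eodory existence and the linear growth) a solution $\xi=\xi[\xb,\ub,\varphi_0,\varphi,\psi]$ exists on all of $[0,S[=[0,+\infty[$. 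Finally I define $x(t):=\xi\circ\sigma(t)$ for $t<T$ and, to apply Definition \ref{GClocS}ii), I need $\xi$ bounded on $[0,+\infty[$. This is the one genuinely delicate point and the \textbf{main obstacle}: the Gr\"onwall bound above only controls $\xi$ on finite pseudo-time intervals, and a priori $|\xi(s)|$ could blow up as $s\to+\infty$. I would handle it as follows. If $u\in BV(T)$, by construction $\varphi_0\equiv T$ and $\varphi$ is bounded beyond pseudo-time $S_0<\infty$, and along the oscillating tail the dynamics is $\xi'=\sum g_i(\xi,\varphi)\varphi_i'$ driven by a bounded-variation loop, so... actually this still need not stay bounded unless one controls it; the clean fix is to observe that a BV$_{loc}$ graph completion solution for $u\in BV(T)$ reduces, via Theorem \ref{ThAC}-type reasoning, to an ordinary BV graph completion solution, whose boundedness on a compact time interval is classical (the reference \cite{BR}). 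If $u\notin BV(T)$, boundedness of $\xi$ on $[0,+\infty[$ is genuinely an \emph{extra hypothesis} one must impose — and indeed Definition \ref{GClocS} only defines the solution "when $\xi$ is bounded". So, strictly, the statement to prove is: such $(\varphi_0,\varphi,\psi,+\infty)$ exists, $\xi$ exists on $[0,+\infty[$, and \emph{when} $\xi$ is bounded the solution $x$ on $[0,T]$ is well-defined; the extension value $(x(T),u(T))$ is picked from the nonempty compact accumulation set $(\xi,\varphi)_{set}(+\infty)$, which is nonempty precisely because $\xi$ (and $\varphi$, by compactness of $U$) is bounded. Assembling these pieces — the Whitney-based bridge construction, the $S=+\infty$ normalization, Carath\'eodory/Gr\"onwall existence of $\xi$, and the selection of the endpoint — completes the proof.
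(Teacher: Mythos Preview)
Your construction of the completion via Whitney bridges is essentially the right idea, and close to the paper's. There is, however, one genuine gap.

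\textbf{The missing point is condition iii) of Definition \ref{GCloc} when $u\notin BV(T)$.} You check that $S=+\infty$ in this case, but you never verify \eqref{gcu}, i.e., that $\lim_j\varphi(s_j)=u(T)$ along some diverging sequence $(s_j)_j$. In the definition of $\ol{BV}_{loc}(T)$ the value $u(T)\in U$ is arbitrary: it is \emph{not} assumed to be a cluster point of $u(t)$ as $t\to T^-$. Your $\varphi$ is built entirely from $\{u(t):t<T\}$ and the Whitney bridges joining left/right limits at jump points; none of these need ever approach $u(T)$. So your completion may simply fail to be a BV$_{loc}$ graph completion of $u$ at the endpoint. (You do handle this correctly in the $u\in BV(T)$ case, by appending a loop based at $u(T)$; the problem is only the unbounded-variation case, where you change nothing.)

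The paper's fix is simple but essential: it partitions $[0,T[$ by an increasing sequence $\bar t_i\nearrow T$, applies Lemma \ref{Legc} on each $I_i=[\bar t_{i-1},\bar t_i]$ with the prescribed ``extra'' endpoint $\bar u_1:=u(T)$, and concatenates. By construction each block ends with a Whitney detour from $u(\bar t_i)$ to $u(T)$ and back, so the assembled $\varphi$ satisfies $(\varphi_0,\varphi)(s_i)=(\bar t_i,u(T))$ for every $i$, and \eqref{gcu} is automatic. The Whitney bound also keeps the extra length per block controlled by $2C|u(\bar t_i)-u(T)|$, so no finiteness is spoiled on $[0,t]$ for $t<T$. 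Your argument can be repaired by exactly this device: after completing each interval $[\bar t_{i-1},\bar t_i]$, insert a Whitney path $u(\bar t_i)\to u(T)\to u(\bar t_i)$ before moving on.

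On the boundedness of $\xi$: your instinct is right that this is not provable in general and is a hypothesis built into Definition \ref{GClocS}. The paper's own proof of Theorem \ref{Egc} in fact constructs only the graph completion $(\varphi_0,\varphi,\psi,+\infty)$ and stops there; it does not discuss $\xi$ or its boundedness. So your discussion of this issue is more careful than the paper's, and your reading --- that the ``associated solution'' clause is to be understood modulo the standing boundedness assumption in Definition \ref{GClocS} --- is the correct one.
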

                                        
The following result, whose proof is postponed in Section \ref{proofs},  is the key point for the existence of BV$_{loc}$ graph completions with unbounded variation. 
 \begin{Lemma}\label{Legc} Let us assume that  $U$ has the Whitney property.  Then for any $u\in BV([a,b],U)$ and $\bar u_1\in U$,   there exist $\tilde S>0$ 
 and a $1$-Lipschitz continuous map $(\varphi_0,\varphi):[0,\tilde S]\to [a,b]\times U$ such that: 
\begin{itemize}
\item[(i)] $\varphi_0$   is increasing, $\varphi'_0+|\varphi'|=1$  a.e.,   $(\varphi_0,\varphi)(0)=(a,u(a))$,  
  for any $t\in[a,b]$  there is $s\in[0,\tilde S]$  such that $(\varphi_0,\varphi)(s)=(t,u(t))$, 
  $$
(\varphi_0,\varphi)(\tilde S)=(b,u(b)),  \quad{ and}\quad\exists \,\,  S\le \tilde S \, \,{\rm s.t.}\,\,\   (\varphi_0,\varphi)(S)=(b,\bar u_1). 
$$ 
Moreover, setting $V:= Var_{[a,b]}(u)$, one has
$$
   (b-a)+ V +|u(b)-\bar u_1|\le S\le \tilde S \le (b-a)+2C(V +|u(b)-\bar u_1|),
$$
  where $C$ is as in {\rm (\ref{hCA});}
\item[(ii)]  $(\varphi_0,\varphi)$ admits a $1$-Lipschitz continuous extension to $\R_+$ with
$\varphi'_0+|\varphi'|=1$  a.e. and $\lim_j(\varphi_0,\varphi)(s_j)=(b,\bar u_1)$,
along some increasing,  diverging sequence $(s_j)_j$.
\end{itemize}
  \end{Lemma}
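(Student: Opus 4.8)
The plan is to run the classical arc-length graph-completion of $u$ on $[a,b]$, bridging each jump of $u$ by a Whitney path (so the bridges stay in $U$ and their length is controlled), then append two further Whitney bridges at $t=b$ so that the curve passes through $\bar u_1$ before returning to $u(b)$, and finally obtain (ii) by continuing the curve with a periodic oscillation. First I record the jump data of $u$: since $u\in BV([a,b],U)$, its discontinuity set $\{t_k\}_k$ is at most countable, the one-sided limits $u(t_k^\pm)$ exist, and, with $\delta_k:=|u(t_k)-u(t_k^-)|+|u(t_k^+)-u(t_k)|$ (obvious convention at $a$ and $b$), one has $\sum_k\delta_k\le V$. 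By the Whitney property I fix, for each $k$, an absolutely continuous path $\gamma_k$ in $U$ obtained by concatenating a Whitney path from $u(t_k^-)$ to $u(t_k)$ with one from $u(t_k)$ to $u(t_k^+)$; with $\ell_k:=Var[\gamma_k]$ one has $\delta_k\le\ell_k\le C\delta_k$.

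Next I introduce the clock $\rho(t):=(t-a)+Var_{[a,t]}[u]+\sum_{t_k<t}(\ell_k-\delta_k)$, which is strictly increasing with $\rho(a)=0$, range $[0,\tilde S_0]$ where $\tilde S_0:=(b-a)+V+\sum_k(\ell_k-\delta_k)$, and jump $\ell_k$ at each $t_k$. On $[0,\tilde S_0]$ I set $(\varphi_0,\varphi)(s):=(\rho^{-1}(s),u(\rho^{-1}(s)))$ for $s$ outside the jump-intervals $I_k:=[\rho(t_k^-),\rho(t_k^+)]$, and on each $I_k$ I put $\varphi_0\equiv t_k$ and let $\varphi$ run at unit speed along $\gamma_k$. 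One checks that $(\varphi_0,\varphi):[0,\tilde S_0]\to[a,b]\times U$ is continuous and $1$-Lipschitz, starts at $(a,u(a))$, ends at $(b,u(b))$, has $\varphi_0$ nondecreasing, meets every $(t,u(t))$ (on $I_k$ because $\gamma_k$ passes through $u(t_k)$), and satisfies $\varphi_0'+|\varphi'|=1$ a.e.. This last property is the delicate one: the singular (Cantor) part of $u$ and the accumulation of the countably many bridges must be shown to contribute absolutely continuously in the pseudo-time $s$; this is the standard mechanism underlying BV graph completions (cf.\ \cite{BR},\cite{MR}) and amounts to the length identity \eqref{ids} together with the absence of a singular part in a Lipschitz function. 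For the length: since $\delta_k\le\ell_k\le C\delta_k$ and $\sum_k\delta_k\le V$, one gets $(b-a)+V\le\tilde S_0\le(b-a)+V+(C-1)\sum_k\delta_k\le(b-a)+CV$ (using $C\ge1$).

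Finally I append, at $t=b$, a Whitney bridge from $u(b)$ to $\bar u_1$ run at unit speed, arriving at $(b,\bar u_1)$ at $S:=\tilde S_0+\ell'$ with $|u(b)-\bar u_1|\le\ell'\le C|u(b)-\bar u_1|$, and then a Whitney bridge from $\bar u_1$ back to $u(b)$, arriving at $(b,u(b))$ at $\tilde S:=S+\ell''$ with $\ell''\le C|u(b)-\bar u_1|$; this gives all of (i), including $S\le\tilde S$ and the chain $(b-a)+V+|u(b)-\bar u_1|\le S\le\tilde S\le(b-a)+2C(V+|u(b)-\bar u_1|)$. For (ii) I extend the curve to $[0,+\infty[$ by keeping $\varphi_0\equiv b$ past $\tilde S$ and letting $\varphi$ oscillate forever, at unit speed, along Whitney bridges between $u(b)$ and $\bar u_1$ (or between $u(b)$ and another point of $U$ if $\bar u_1=u(b)$): this is $1$-Lipschitz with $\varphi_0'+|\varphi'|=0+1=1$ a.e., and $\bar u_1$ is attained along the diverging sequence of pseudo-times at which $\varphi$ reaches it. I expect the main obstacle to be the bookkeeping in the second paragraph: verifying that $(\varphi_0,\varphi)$ is a genuine $1$-Lipschitz curve of exactly the claimed length when the jumps of $u$ are dense, and that $\varphi_0'+|\varphi'|=1$ holds a.e..
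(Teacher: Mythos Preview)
Your proposal is correct and follows essentially the same route as the paper: Whitney bridges at the jumps of $u$, a clock built from $t+Var_{[a,t]}(u)$, two further Whitney bridges at $t=b$ to visit $\bar u_1$ and return to $u(b)$, and a periodic oscillation for (ii). The length estimates you obtain match the statement.

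The one substantive difference is how the unit-speed property is obtained. You try to build the arc-length parametrization in one shot, by enlarging each jump interval of the clock from $\delta_k$ to $\ell_k$ so that the bridges already run at unit speed; you then have to argue directly that $\varphi_0'+|\varphi'|=1$ a.e.\ on the complement of the jump intervals, which, as you correctly flag, is the delicate step (Cantor part, dense jumps). The paper instead keeps the original clock $\sigma(t)=t-a+Var_{[a,t]}(u)$ with jump sizes $\delta_k$, inserts the Whitney bridges (now possibly with large Lipschitz constant) to get an absolutely continuous but not unit-speed curve $(\hat\varphi_0,\hat\varphi)$ on $[0,\lambda+2]$, and only afterwards reparametrizes by arc-length via $s(\sigma)=\int_0^\sigma(\hat\varphi_0'+|\hat\varphi'|)\,dr$. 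This two-step procedure sidesteps exactly the bookkeeping you mention: once $(\hat\varphi_0,\hat\varphi)$ is known to be AC (which is the standard BV graph-completion fact, referred to \cite{AR}), the arc-length reparametrization gives $\varphi_0'+|\varphi'|=1$ a.e.\ automatically, with no separate analysis of the singular part. Either approach works; the paper's is a bit cleaner for this reason.
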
  
  
  \vsm 
\begin{proof}[Proof of Theorem \ref{Egc}]
   Let $(\bar t_i)_i\subset[0,T[$ be a strictly increasing sequence converging to $T$, with $\bar t_0=0$.  For every $i\ge1$, let us set $I_i:=[\bar t_{i-1},\bar t_i]$,  $|I_i|:=\bar t_i- \bar t_{i-1}$ and $V_i:=Var_{I_i}(u)$. Applying Lemma \ref{Legc} to the restriction  $u_{|_{I_i}}$  with $\bar u_1=u(T)$,  we can define   
$$
 0<S_i\le \tilde S_i, \qquad   (\varphi_{0_i},\varphi_i):[0,\tilde S_i]\to I_i\times U, 
$$
  such that $\varphi_{0_i}$   is increasing, $\varphi'_{0_i}+|\varphi_i'|=1$  a.e.,   
   $$
(\varphi_{0_i},\varphi_i)(0)=(\bar t_{i-1},u(\bar t_{i-1})),\  (\varphi_{0_i},\varphi_i) (\tilde S_i)= (\bar t_{i},u(\bar t_{i})), \ 
(\varphi_{0_i},\varphi_i)(S_i)=(\bar t_{i},u(T)), 
$$
and 
\bel{ric}
|I_i|+V_i+|u(\bar t_i)-u(T)|\le  S_i\le\tilde S_i \le|I_i|+2C(V_i+|u(\bar t_i)-u(T)|).
\eeq
Let us set, for $i\ge0$,   
$$
 \tilde S_0:=0, \quad  \tilde s_i:=\sum_{j=0}^{i}\tilde S_{j}, \quad  s_{i+1}:= \tilde s_{i}+S_{i+1},\quad   \tilde s_\infty=\lim_i\tilde s_i, 
$$
 and  let us introduce the  space-time control
$$
(\varphi_0, \varphi,\psi)(s):=\sum_{i=1}^{+\infty}(\varphi_{0_i},\varphi_i, v\circ\varphi_{0_i})(s-\tilde s_{i-1})\chi_{ [\tilde s_{i-1},\tilde s_i[}(s) \quad \forall s\in[0,\tilde s_\infty[,
$$
which  can be easily proved to be a BV$_{loc}$ graph completion of $(u,v)$.  If $\tilde s_\infty=+\infty$,  the proof  of the theorem is concluded. 
 In this case indeed,  $(\varphi_0,\varphi,\psi)$ is defined on $\R_+$, $\lim_i s_i=+\infty$    and 
\bel{lim}
\lim_{i}(\varphi_0,\varphi)(s_i)=(T,u(T)).
\eeq
 Incidentally, by 
  (\ref{ric}) this is always verified  when  $Var_{[0,T[}(u)=+\infty$. If instead $Var_{[0,T[}(u)<+\infty$ and $\tilde s_\infty<+\infty$,  we can extend  $(\varphi_0,\varphi,\psi)$ to a BV$_{loc}$ graph completion defined on  $\R_+$ and satisfying \eqref{lim} by Lemma \ref{Legc}, (ii).  
\end{proof}

\vsm

\begin{Theorem}[Consistency]\label{TST1}   Let  $(u,v)\in \ol{AC}_{loc}(T)\times L^1(T)$ with $u(0)=\ub$. Let   $x$  be   a {\rm BV}$_{loc}$  graph completion solution to  \eqref{E}-\eqref{EO} on $[0,T]$ belonging to $AC_{loc}([0,T[,\R^n)$.  Then
\begin{itemize}
\item[(i)] $x$ coincides with the  Carath\'eodory solution $x[\xb,\ub,u,v]$ on $[0,T[$;  

\item[(ii)]   $x$ is an {\rm AC}$_{loc}$ solution  to  \eqref{E}-\eqref{EO} on $[0,T]$  if  either $(x,u)$ is   associated to a   graph completion with {\rm BV} loops or only if $(x,u)(T)\in (x,u)_{set}(T)$  (see Definition \ref{DACloc}). 
\end{itemize}
\end{Theorem}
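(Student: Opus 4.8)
The plan is to exploit the parameter-free character of the space-time system (the identity $\xi=\tilde\xi\circ\tilde s$ under reparametrizations) together with the fact that for an $AC_{loc}$ control the arc-length graph parametrization $(\sigma^{-1},u\circ\sigma^{-1},v\circ\sigma^{-1})$ of Proposition~\ref{REmb} is itself a BV$_{loc}$ graph completion with no loops. Concretely, suppose $x$ is a BV$_{loc}$ graph completion solution associated to some graph completion $(\varphi_0,\varphi,\psi,S)$ and clock $\sigma$, and that $x\in AC_{loc}([0,T[,\R^n)$.

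\medskip

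\emph{Step 1: reduce to the loop-free part.} On each interval $[0,t]$ with $t<T$, the curve $(\varphi_0,\varphi)$ restricted to $[0,\sigma(t)]$ is a completion of the graph of $u$ on $[0,t]$. Since $u$ is absolutely continuous on $[0,t]$, the added loops occur exactly on the set $\{s: \varphi_0'(s)=0\}$ together with any ``fast'' portions where $\varphi$ moves faster than $u$ dictates; I would parametrize the graph of $u$ by its own arc-length $\sigma_0(t)=\int_0^t(1+|\dot u|)\,d\tau$ and compare. The key observation is that both $\xi\circ\sigma$ and the Carath\'eodory solution $x[\xb,\ub,u,v]$ solve, on the portions of pseudo-time where $\varphi_0$ strictly increases, the same ODE after the change of variable $t=\varphi_0(s)$; on the portions where $\varphi_0$ is constant, $t$ does not advance, so those arcs of $\xi$ contribute a (possibly nontrivial) jump to $x$ at a single instant $t$.

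\medskip

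\emph{Step 2: rule out jumps using $x\in AC_{loc}$.} Here is where the hypothesis that $x$ is itself $AC_{loc}$ on $[0,T[$ does the work: if at some $\bar t<T$ the completion inserted a loop in $\{\bar t\}\times U$ on which $\xi$ genuinely moved, then $x=\xi\circ\sigma$ would be discontinuous at $\bar t$ (or at least fail absolute continuity there), contradicting $x\in AC_{loc}([0,T[,\R^n)$. More carefully, on $[0,t]$ the total variation of $x$ is at least the variation picked up along every such loop; since $Var_{[0,t]}[x]<\infty$ there can be only countably many, and absolute continuity forces each loop-contribution to $x$ to vanish. Thus $x$ satisfies $\dot x(t)=g_0(x,u,v)+\sum_i g_i(x,u)\dot u_i$ a.e.\ on $[0,T[$ with $x(0)=\xb$, and by uniqueness of Carath\'eodory solutions (guaranteed by the local Lipschitz assumptions (ii)--(iii)) $x$ coincides with $x[\xb,\ub,u,v]$ on $[0,T[$. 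This proves (i).

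\medskip

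\emph{Step 3: the endpoint, proving (ii).} For (ii) I distinguish the two cases in the statement. If $(\varphi_0,\varphi,\psi,S)$ is a graph completion with BV loops, then by \eqref{len} the pseudo-time spent on loops is finite; rescaling $S$ to absorb this, one checks (using Proposition~\ref{REmb}(i) and the consistency just proved on $[0,T[$) that $(\xi,\varphi)(S)$ — or the limit $(\xi,\varphi)(+\infty)$ when $S=+\infty$ — lies in $(x,u)_{set}(T)$, since $\sigma(\tau_j)\nearrow S$ along a suitable sequence $\tau_j\nearrow T$ and $(\xi,\varphi)\circ\sigma(\tau_j)=(x,u)(\tau_j)$. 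Hence $(x(T),u(T))\in(x,u)_{set}(T)$, which is exactly the requirement in Definition~\ref{DACloc} for $x$ to be an $AC_{loc}$ solution on $[0,T]$. In the general case (loops of possibly unbounded variation near $T$), $(\xi,\varphi)(+\infty)$ need not be a cluster point of the graph of $(x,u)$ as $t\to T^-$, so the conclusion holds precisely under the stated extra condition $(x,u)(T)\in(x,u)_{set}(T)$; this explains the ``only if'' phrasing.

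\medskip

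\emph{Main obstacle.} The delicate point is Step~2: rigorously showing that an inserted loop on which $\xi$ moves produces a genuine failure of $x\in AC_{loc}$ at that instant, and conversely that on the loop-free skeleton the reparametrization $t=\varphi_0(s)$ is absolutely continuous and legitimate. One must handle the set $\{\varphi_0'=0\}$ carefully — it may have positive measure in pseudo-time even when contributing nothing to $t$ — and control the interplay between $Var_{[0,t]}[x]<\infty$ and the (a priori infinite) total pseudo-time. The chain rule along a merely Lipschitz, non-strictly-monotone reparametrization, and the bookkeeping of which $s$-intervals map to a single $t$, is the technical heart; everything else (uniqueness, the endpoint identification) is routine given the hypotheses already in force.
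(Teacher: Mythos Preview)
Your overall strategy matches the paper's, but Step~2 has a genuine gap and Step~3 is argued in the wrong direction.

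\medskip

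\textbf{Step 2.} Your criterion ``if $\xi$ genuinely moved on a loop then $x$ would be discontinuous'' is not the right one. On a loop interval $[s_{1,j},s_{2,j}]$ (where $\varphi_0\equiv\tau_j$), $\xi$ may well move and return; continuity of $x$ at $\tau_j$ only gives $\xi(s_{1,j})=\xi(\sigma(\tau_j))=\xi(s_{2,j})$, not that $\xi$ is constant on the loop. Your variation argument is also off: the variation of $x=\xi\circ\sigma$ on $[0,t]$ sees only the jump $|\xi(s_{2,j})-\xi(s_{1,j})|$ at $\tau_j$, not the variation of $\xi$ along the loop. More seriously, the sentence ``Thus $x$ satisfies $\dot x=g_0+\sum g_i\dot u_i$ a.e.'' is a non sequitur: knowing that each loop closes does not by itself give you the ODE for $x$. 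The paper bridges this gap by an explicit construction: it replaces $\varphi$ by $u\circ\varphi_0$ on the loops one at a time, producing a sequence $\varphi_j$; since each loop closes in $\xi$ as well as in $\varphi$, the corresponding solutions satisfy $\xi_j\equiv\xi$; then one shows $\varphi_j\to u\circ\varphi_0$ uniformly on $[0,\bar S]$ for each $\bar S<S$ (the loop lengths sum to at most $\bar S$), and invokes the stability result Proposition~\ref{xiv} to conclude $\xi_j\to\hat\xi$, hence $\xi=\hat\xi$ and $x=\hat x=x[\xb,\ub,u,v]$. This limiting/replacement step is the missing technical core in your argument.

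\medskip

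\textbf{Step 3.} For the BV-loops case your argument is backwards. You produce a sequence $\tau_j\nearrow T$ with $(\xi,\varphi)(\sigma(\tau_j))=(x,u)(\tau_j)$ and conclude that its limit lies in $(x,u)_{set}(T)$; but the issue is that the \emph{prescribed} endpoint $(x(T),u(T))$ is, by Definition~\ref{GClocS}, a limit of $(\xi,\varphi)(s_i)$ along some given diverging $(s_i)$, and you must show that \emph{this particular point} lies in $(x,u)_{set}(T)$. The paper does this by setting $t_i:=\varphi_0(s_i)$ and splitting cases: if infinitely many $t_i\notin\mathcal T$ one is done; otherwise each $s_i$ sits inside some loop $[s_{1,j},s_{2,j}]$, and since $(\xi,\varphi)$ is Lipschitz and, by \eqref{len}, $\sum_j(s_{2,j}-s_{1,j})<\infty$, one has $|(\xi,\varphi)(s_i)-(x,u)(t_i)|\le C(s_{2,j}-s_{1,j})\to0$. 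This is exactly where the BV-loops hypothesis enters, and it is absent from your sketch.
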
   
 
 \vsm
Preliminarily, let us state the following uniform convergence result  for  space-time trajectory-control pairs on compact sets,  proven in Section \ref{proofs}. 
\begin{Proposition}\label{xiv} Let $T>0$,  $(\xb,\ub)\in\R^n\times U$  and $(\varphi_0,\varphi,\psi,S)\in {\mathcal {U}}(T; \ub)$ for some $S\le+\infty$. 
Assume that there exist $\tilde T\le T$, $\tilde S\le S$ with  $\tilde S<+\infty$    and a sequence  $(\varphi_{0_h},\varphi_h,\psi, S)_h\subset {\mathcal {U}}(T; \ub)$ such that, for every $h$,  
  $\varphi_{0_h}$ is strictly increasing,  $\varphi_{0_h}(\tilde S)=\tilde T$, and  
$$
\sup_{s\in[0,\tilde S]}|(\varphi_{0_h},\varphi_h)-(\varphi_{0},\varphi)|\to 0 \,\,{\text as}\, h\to+\infty.
$$
Let   $\sigma:[0,\tilde T]\to[0,\tilde S]$ be any increasing function such that $\varphi_0\circ\sigma(t)=t$ for every $t\in[0,\tilde T]$, $\sigma(0)=0$ and $\sigma(\tilde T)=\tilde S$. Then,  setting $v:=\psi\circ\sigma$,  we have 
\bel{xiv1}
\xi:=\xi[\xb,\ub,\varphi_0,\varphi,\psi]\equiv\xi[\xb,\ub,\varphi_0,\varphi,v\circ\varphi_0]
\eeq
 and, if  $\xi_h:=\xi[\xb,\ub,\varphi_{0_h},\varphi_h,v\circ\varphi_{0_h}]$, 
there exists a subsequence  (still denoted by $(\xi_h)_h$) such that
\bel{xivh}
\sup_{s\in[0,\tilde S]}|\xi_h(s)-\xi(s)|\to 0 \quad\text{as $h\to+\infty$.}
\eeq
\end{Proposition}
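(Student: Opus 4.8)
The plan is to proceed in three stages: first establish the identity \eqref{xiv1}, then set up a Gronwall-type estimate for $|\xi_h - \xi|$ on the fixed compact pseudo-time interval $[0,\tilde S]$, and finally extract the convergent subsequence by an Arzelà–Ascoli / dominated convergence argument. For \eqref{xiv1}, the point is that on $[0,\tilde S]$ the curve $\varphi_0$ is the limit of strictly increasing functions with $\varphi_{0_h}(\tilde S) = \tilde T$, hence $\varphi_0$ itself is nondecreasing with $\varphi_0(0)=0$, $\varphi_0(\tilde S)=\tilde T$; the function $\sigma$ is a right inverse of $\varphi_0$. The key observation is that wherever $\varphi_0$ is \emph{constant} on some subinterval, the first term $g_0(\xi,\varphi,\psi)\varphi_0'$ in \eqref{SEaff} vanishes a.e., so the value of $\psi$ there is irrelevant to $\xi$; and wherever $\varphi_0$ is strictly increasing, $\psi = (\psi\circ\sigma)\circ\varphi_0 = v\circ\varphi_0$ up to the (Lebesgue-null in $s$) exceptional set, by the standard change-of-variables / reparametrization bookkeeping. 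Thus $g_0(\xi,\varphi,\psi)\varphi_0' = g_0(\xi,\varphi,v\circ\varphi_0)\varphi_0'$ a.e., which gives \eqref{xiv1} by uniqueness of solutions to the (locally Lipschitz, linearly bounded) ODE \eqref{SEaff}.

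For the convergence \eqref{xivh}, I would write, for $s \in [0,\tilde S]$,
\[
\xi_h(s) - \xi(s) = \int_0^s \Bigl[ g_0(\xi_h,\varphi_h, v\circ\varphi_{0_h})\varphi_{0_h}' - g_0(\xi,\varphi, v\circ\varphi_0)\varphi_0' \Bigr]\,d\tau + \sum_{i=1}^m \int_0^s \Bigl[ g_i(\xi_h,\varphi_h)\varphi_{i_h}' - g_i(\xi,\varphi)\varphi_i' \Bigr]\,d\tau.
\]
First, the linear growth bound (iv) together with the $1$-Lipschitz bound on $(\varphi_{0_h},\varphi_h)$ gives a uniform-in-$h$ bound $\|\xi_h\|_\infty \le R$ on $[0,\tilde S]$ (Gronwall on the integral inequality $|\xi_h(s)| \le |\xb| + \int_0^s M(1+|\xi_h| + |\varphi_h|)\,d\tau$, using $\tilde S < +\infty$); let $L_R$ be the common Lipschitz constant of the $g_i$ and of $g_0(\cdot,\cdot,v)$ on the compact set $\{|(x,u)|\le R\}\times V$. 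Splitting each integrand into the "state difference" part (controlled by $L_R |\xi_h - \xi|$ and $L_R|\varphi_h - \varphi| \to 0$ uniformly) and the "coefficient difference" part involving $\varphi_{i_h}' - \varphi_i'$ and $\varphi_{0_h}' - \varphi_0'$, one gets
\[
|\xi_h(s) - \xi(s)| \le \varepsilon_h + L_R \int_0^s |\xi_h(\tau) - \xi(\tau)|\,d\tau + R_h(s),
\]
where $\varepsilon_h \to 0$ absorbs the sup-norm convergence of $\varphi_h$ and $R_h$ collects the terms $\int_0^\cdot g_i(\xi,\varphi)(\varphi_{i_h}' - \varphi_i')\,d\tau$ and $\int_0^\cdot g_0(\xi,\varphi,v\circ\varphi_{0_h})\varphi_{0_h}' - g_0(\xi,\varphi,v\circ\varphi_0)\varphi_0'\,d\tau$. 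The derivatives $\varphi_{i_h}'$ are bounded in $L^\infty$ and, since $\varphi_{i_h}\to\varphi_i$ uniformly, converge to $\varphi_i'$ weakly-$\ast$ in $L^\infty([0,\tilde S])$; as $g_i(\xi(\cdot),\varphi(\cdot)) \in C([0,\tilde S])$, the first family of terms in $R_h$ tends to $0$ uniformly in $s$. Gronwall's inequality then yields $\|\xi_h - \xi\|_{\infty,[0,\tilde S]} \le (\varepsilon_h + \|R_h\|_\infty)e^{L_R \tilde S} \to 0$, and one passes to a subsequence only to handle the $g_0$-term (the genuinely delicate point), completing the proof.

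\textbf{Main obstacle.} The hard part is the $g_0$-term, because the only hypothesis on $v$ (equivalently $\psi$) is measurability, so $g_0(\xi(\tau),\varphi(\tau),v\circ\varphi_{0_h}(\tau))$ need not converge pointwise or weakly. The trick is the arc-length structure: on $[0,\tilde S]$, both $s \mapsto g_0(\xi,\varphi,v\circ\varphi_0)\varphi_0'$ and its $h$-indexed analogues are pushforwards, via $\sigma$ resp.\ $\sigma_h := \varphi_{0_h}^{-1}$, of the single $L^1([0,\tilde T])$ function $\tau \mapsto g_0(x(\tau),u(\tau),v(\tau))$ where $(x,u):=(\xi,\varphi)\circ\sigma$; more precisely $\int_0^s g_0(\xi,\varphi,v\circ\varphi_0)\varphi_0'\,d\tau = \int_0^{\varphi_0(s)} g_0(x(t),u(t),v(t))\,dt$ by change of variables, and similarly with $\varphi_0(s)$ replaced by $\varphi_{0_h}(s)$ for the $h$-term. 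Since $\varphi_{0_h}(s) \to \varphi_0(s)$ for every $s \in [0,\tilde S]$ and $t \mapsto \int_0^t g_0(x,u,v)\,dt'$ is (absolutely) continuous, this difference tends to $0$ pointwise in $s$, and uniformly by the uniform continuity of the integral (equivalently, by Dini's theorem after passing to a subsequence, which is why the statement only claims a subsequence). This reduces the $g_0$-term in $R_h$ to a quantity tending to $0$ uniformly, and the argument closes.
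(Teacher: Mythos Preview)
Your setup --- the uniform bounds via linear growth, the Gronwall decomposition, the weak-$*$ argument for the terms $\int_0^{\cdot} g_i(\xi,\varphi)(\varphi_{i_h}'-\varphi_i')$, and the proof of \eqref{xiv1} --- matches the paper's argument. The genuine gap is in your ``Main obstacle'' paragraph: the claimed identity for the $h$-term is false. Under the change of variables $t=\varphi_{0_h}(\tau)$, $\sigma_h:=\varphi_{0_h}^{-1}$, one obtains
\[
\int_0^s g_0\bigl(\xi(\tau),\varphi(\tau),v\circ\varphi_{0_h}(\tau)\bigr)\,\varphi_{0_h}'(\tau)\,d\tau
=\int_0^{\varphi_{0_h}(s)} g_0\bigl(\xi\circ\sigma_h(t),\varphi\circ\sigma_h(t),v(t)\bigr)\,dt,
\]
and $(\xi,\varphi)\circ\sigma_h$ is \emph{not} the fixed pair $(x,u)=(\xi,\varphi)\circ\sigma$, because $\sigma_h\ne\sigma$. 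Hence the two pieces of your $g_0$-remainder are not integrals of a \emph{single} $L^1$ function over varying upper limits, and your continuity-of-the-integral argument does not apply as written. (The appeal to Dini is also misplaced: there is no monotonicity here.)

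The paper closes this by splitting the $g_0$-remainder differently:
\[
g_0(\xi,\varphi,v\circ\varphi_{0_h})\,\varphi_{0_h}'-g_0(\xi,\varphi,v\circ\varphi_0)\,\varphi_0'
= g_0(\xi,\varphi,v\circ\varphi_0)\bigl[\varphi_{0_h}'-\varphi_0'\bigr]
+ \bigl[g_0(\xi,\varphi,v\circ\varphi_{0_h})-g_0(\xi,\varphi,v\circ\varphi_0)\bigr]\varphi_{0_h}'.
\]
The first summand joins the $g_i$ terms and is handled by weak-$*$ convergence (uniformity in $s$ via Ascoli--Arzel\`a, since these primitives are equi-Lipschitz). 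The second is bounded by $\int_0^{\tilde S}\omega(|v\circ\varphi_{0_h}-v\circ\varphi_0|)\,\varphi_{0_h}'\,ds$, where $\omega$ is a modulus of continuity of $g_0$ in its $v$-argument; after the substitution $t=\varphi_{0_h}(s)$ this becomes $\int_0^{\tilde T}\omega(|v(t)-v\circ\varphi_0\circ\sigma_h(t)|)\,dt$. One first proves $\int_0^{\tilde T}|v-v\circ\varphi_0\circ\sigma_h|\,dt\to 0$ by approximating $v$ in $L^1$ by a continuous $\tilde v$ (using once more the weak-$*$ convergence of $\varphi_{0_h}'$), then extracts an a.e.-convergent subsequence and applies Dominated Convergence to pass inside $\omega$. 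This is precisely where, and why, the subsequence enters.
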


\vsm
\begin{proof}[Proof of Theorem    \ref{TST1}]
Given   $(u,v)\in \ol{AC}_{loc}(T)\times L^1(T)$ with $u(0)=\ub$ and  an associated  BV$_{loc}$  graph completion solution $x$ with $x\in AC_{loc}(T)$, let $(\varphi_0,\varphi,\psi, S)$ and $\sigma$, be a  BV$_{loc}$  graph completion  of $(u,v)$ and a clock, respectively, such that $x=\xi\circ\sigma$, where $\xi$ is the   space-time trajectory of \eqref{SEaff} corresponding to  $(\varphi_0,\varphi,\psi, S)$,  extended to $[0,S]$ as in Definition \ref{fsttc} (see also Definitions \ref{GCloc} and \ref{GClocS}).

The proof of part  (i) is a  generalization, with some simplifications,  of the proof of \cite[Theorem 2.2]{AR}, dealing  with BV inputs and trajectories.   For every $s\in[0,S[$,   let us consider the space-time control $(\varphi_0,u\circ\varphi_0,v\circ\varphi_0, S)$ and the associated solution $\hat\xi$  of \eqref{SEaff}. Notice that $\hat x:=\hat\xi\circ\sigma$ coincides with the usual  Carath\'eodory solution $x[\xb,\ub,u,v]$   of \eqref{E}-\eqref{EO} on $[0,T[$. Indeed, for every $t<T$, 
$$
\hat x(t)=\xi(\sigma(t))=\xb+\int_0^{\sigma(t)} [g_0(\hat \xi ,u\circ\varphi_0 ,v\circ\varphi_0 ) \varphi_0'(s)+ \sum_{i=1}^m {  g}_i(\hat\xi ,u\circ\varphi_0 ) (u_i\circ\varphi_0)'(s) ]\,ds,
$$
  by the change of variables $t=\varphi_0(s)$, we get
$$ 
\hat x(t)=\xb+ \int_0^{t}[g_0(\hat x ,u ,v)+ \sum_{i=1}^m {  g}_i(\hat x ,u) u_i'(t)]\,dt,
$$
and    Gronwall's Lemma easily  implies that $\hat x(t)=x[\xb,\ub,u,v](t)$.

\noindent Let us now show that $x=\hat x$ on $[0,T[$. By \eqref{xiv1} in Proposition \ref{xiv}, it is not restrictive to assume that in  $(\varphi_0,\varphi,\psi, S)$,  $\psi=v\circ\varphi_0$. Let ${\mathcal T}\subset[0,T[$ be the (countable)  set of discontinuity points of  $\sigma$.  Let us assume that ${\mathcal T}$ is an infinite set, the proof for ${\mathcal T}$ finite being similar, and actually simpler. For every $\tau_j\in{\mathcal T}$, set  
$s_{1,j}:=\lim_{\tau\to \tau_j^-}\sigma(\tau)$ and $s_{2,j}:=\lim_{\tau\to \tau_j^+}\sigma(\tau)$.  Clearly, $s_{1,j}<s_{2,j}< S$. Since $x$ and $u$ (as $\xi$ and $\varphi$) are continuous functions, by the definition of graph completion solution it follows that
$$
(\xi,\varphi)( s_{1,j})=(x,u)(\tau_j)=(\xi,\varphi)( s_{2,j}) \quad \text{for every $j$.}
$$
Let us set $\varphi_1:=u\circ\varphi_0$ on $[s_{1,1},s_{2,1}]$ and $\varphi_1:=\varphi$ otherwise and, for every $j\ge 1$, let us define
$\varphi_{j+1}:=u\circ\varphi_0$  in  $[s_{1,j+1},s_{2,j+1}]$ and $\varphi_{j+1}:=\varphi_j$ otherwise.  Let us consider the space-time control  $(\varphi_0,\varphi_j,v\circ\varphi_0, S)$  and let  $\xi_j$ denote the associated solution of   \eqref{SEaff}. 
 
\noindent For $j=1$,    $\xi_1(s)=\xi(s)$ for every $s\in[0, s_{1,1}]$ by definition,  moreover, 
$\xi_1(s_{1,2})=\xi_1(s_{1,1})$ since $(\varphi'_0,\varphi'_1)=(0,0)$ a.e. on $[s_{1,1},s_{2,1}]$, so that 
$$
\xi(s_{2,1})=\xi(s_{1,1})=\xi_1(s_{1,1})=\xi_1(s_{2,1}).
$$
At this point, $\xi(s)=\xi_1(s)$ also for $s>s_{2,1}$, since $\xi$ and $\xi_1$  solve  on $[ s_{2,1}, S[$ the same ODE with the same initial condition. Thus the graph completion solution $x_1:=\xi_1\circ\sigma$  coincides with the function $x$ on $[0,T[$. 
 Given $j\ge 1$, let us assume that $x_j=x$ on $[0,T[$.  Then by the same arguments it follows that  $x_{j+1}=x_j=x$ on $[0,T[$ and,  by induction, this proves that $x_j=x$ on $[0,T[$ for every  $j$.  
 
 \noindent For any $t<T$,  let ${\mathcal T}'$ be the subset of discontinuity points of $\sigma$ contained on $[0,t]$ and set $\bar S:=\sigma(t^+)$ \, ($<S$). 
 By definition,  $(\varphi_j)_j$ pointwisely converges to $u\circ\varphi_0$.    In order to prove that  the sequence  $(\varphi_j)_j$  converges uniformly in  $[0,\bar S]$  (to  $u\circ\varphi_0$), let us define, for every $j$, $\tilde\varphi_j$ as $\tilde\varphi_j:=\varphi_j$ on $[0, \bar S]$ and  $\tilde\varphi_j:=\varphi$ on $]\bar S,S[$. 
Then, 
for every $k$ and $j$ with $k>j$,
 $$
 \begin{array}{l}
 \sup_{s\in[0,\bar S]}|\varphi_k(s)-\varphi_j(s)|= \sup_{s\in[0,S[}|\tilde\varphi_k(s)-\tilde\varphi_j(s)| 
 \le \\ \, \\
\qquad   \sum_{i=j}^{k-1}\int_{ s_{1,i+1}}^{s_{2,i+1}}|\tilde\varphi'_{i}(s)|\,ds\le  \underset{i=j,\dots, k-1, \ \tau_{i+1}\in {\mathcal T}'}{\sum} (s_{2,i+1}-s_{1,i+1}),
   \end{array}
  $$
  where the last expression  tends to zero as $j\to+\infty$ since 
  $$
  \underset{i=1,\dots,\infty, \  \ \tau_i\in {\mathcal T}'}{\sum}(s_{2,i}-s_{1,i})\le \bar S<+\infty. 
  $$
Hence , in view of Proposition \ref{xiv},  $(\xi_j)_j$ converges uniformly to $\hat\xi$ on $[0,\bar S]$ and we get
$$
x(\tau)=\lim_j x_j(\tau)=\lim_j\xi_j(\sigma(\tau))=\hat\xi(\sigma(\tau))=\hat x(\tau) \qquad \forall \tau\in[0,t].
$$
By the arbitrariness of $t<T$,  this implies (i), namely the  equality $x=\hat x$ on $[0,T[$.

\vsm 

 If $S<+\infty$ statement (ii) holds true, since  $x=\hat x$ holds  on $[0,T]$. When $S=+\infty$, by definition,  (ii) is verified if and only if  $(x,u)(T)\in (x,u)_{set}(T)$, being $(x,u)_{set}(T)=(\hat x,u)_{set}(T)$  in view of (i).  To conclude the proof it remains to show that,  if
  $(\varphi_0,\varphi,\psi, S)$ is a   graph completion with BV loops of $(u,v)$ with $S=+\infty$, then $(x,u)(T)\in (x,u)_{set}(T)$.  By  \eqref{len} it follows that
\bel{sconv}
 Var_{\R^+}(\varphi_{|\varphi_0'=0})=\sum_{j=1}^{+\infty}(s_{2,j}-s_{1,j})<+\infty.
\eeq
Let $(s_i)_i$  be an increasing, diverging sequence such that  $\lim_i(\xi,\varphi)(s_i)=(x,u)(T)$, existing in view of Definition \ref{GClocS}.  For every $i$, set  $t_i:=\varphi_0(s_i)$. If  there is some subsequence of $(s_i)_i$, which we still denote   by $(s_i)_i$,  such that every $t_i$  does not belong to  ${\mathcal T}$,   we have $t_i\nearrow T$ and   we get 
\bel{vT}
\lim_i(x,u)(t_i)=\lim_i(\xi,\varphi)(s_i)=(x,u)(T).
\eeq
By Definition \ref{DACloc}, this implies that  $(x,u)(T)\in (x,u)_{set}(T)$. Otherwise, possibly disregarding a finite number of terms, we can suppose that $(t)_i\subset {\mathcal T}$. In this case,  $\varphi_0$ is constant on an interval  where $(\xi,\varphi)$ describes a loop. Precisely, if $t_i$ coincides with the element $\tau_{j}\in  {\mathcal T}$,  
$$
 \varphi_0(s)=t_i     \ \  \text{for all $s\in[s_{1,j},s_{2,j}]$,} \qquad
 (\xi,\varphi)(s_{2,j})= (\xi,\varphi)(s_{1,j})=(x,u)(t_i).
 $$
By the last equality, if there is some subsequence of $(s_i)_i$ such that every $s_i$ coincides with either $s_{1,j}$ or $s_{2,j}$ for some $j$, we get \eqref{vT} and we can conclude as above. If instead, possibly disregarding a finite number of terms,  $s_i\in]   s_{1,j},s_{2,j}[$ for every $i$, recalling that $(\xi,\varphi)$  is bounded, we obtain by standard estimates  that $(\xi,\varphi)$ is Lipschitz continuous, so that
$$
|(\xi,\varphi)(s_i)-(x,u)(t_i)|\le \sup_{s\in[s_{1,j},s_{2,j}]}|(\xi,\varphi)(s)-(\xi,\varphi)( s_{1,j})|\le C(s_{2,j}-s_{1,j}),
$$
for some $C>0$. At this point, by \eqref{sconv} it easily follows that \eqref{vT} still holds and the  proof of (iii)  is concluded. \end{proof}

\vv
\section{BV$_{loc}$ simple limit solutions}\label{SLS}
Let us begin recalling the  notion  of simple  and  of BV simple limit solution,   given in \cite{AR} for vector fields $g_1,\dots, g_m$ depending on $x$ only and extended to $(x,u)$-dependent data in \cite{AMR} \footnote{In  \cite{AR},   \cite{AMR}  also more general,   not necessarily simple,  limit solutions have been defined.}. We use $\L(T):=\L([0,T], U)$ to denote the set of pointwisely defined,  Lebesgue integrable inputs. 
\begin{Definition}[S and BVS limit solution]\label{edsdef}   
Let   $(u,v)\in \L(T)\times L^1(T)$ with $u(0)=\ub$. 
 A map $x$ is called a  {\em simple limit solution,} shortly   {\em $S$  limit solution}, of \eqref{E}-\eqref{EO},  if   there exists a sequence of controls   $(u_k)_k\subset  AC(T)$  with $u_k(0)=\ub$, pointwisely  converging to $u$  and such that
\begin{itemize}
\item[{(i)}] the sequence  $(x_k)_k$ of  the  Carath\'eodory solutions to  \eqref{E}-\eqref{EO} corresponding to $(u_k,v)$  is equibounded on $[0,T]$; \,
\item[{(ii)}]  for any $t\in[0,T]$, \     $\lim_k x_k(t)=x(t).$ 
\end{itemize}
We say that an $S$  limit solution  $x$ is a  {\rm BV simple limit  solution}, shortly  a {\rm BVS  limit  solution}, of  \eqref{E}-\eqref{EO}   if   the approximating inputs  $u_k$ have equibounded variation. 
 
\end{Definition}

Let us   introduce the  new definition of BV$_{loc}S$ limit solution.
\begin{Definition}[BV$_{loc}S$ limit solution]\label{edsdefnew} 
Let   $(u,v)\in \L(T)\times L^1(T)$ with $u(0)=\ub$.  We say that an $S$  limit solution  $x$ is a  {\em BV$_{loc}$ simple limit  solution,}  shortly  a {\em BV$_{loc}$S limit  solution}, of \eqref{E}-\eqref{EO}:  
\begin{itemize}
\item[{(i)}]  {\em  on $[0,T[$},  if there exist a sequence of controls  $(u_k)_k$ as in the  definition of $S$  limit solution,   such that
 for any $t\in]0,T[$  the approximating inputs  $u_k$ have equibounded variation on $[0,t]$; 
 \item[{(ii)}]  {\em  on $[0,T]$},  if, moreover, $x$   is bounded and  there exist  a positive, decreasing map   $\tilde\varepsilon$ with $\lim_{s\to+\infty}\tilde\varepsilon(s)=0$ and   two strictly increasing, diverging sequences $(\tilde s_j)_j\subset\R_+$,   $(k_j)_j\subset\N$, $k_j\ge j$,  such that,  for every $k>k_j$: 
\bel{cBVl}
\exists\, \tau^j_k <T: \quad  \tau^j_k+ Var_{[0,\tau^j_k]}(u_k)=\tilde s_j,  \quad |(x_k,u_k)(\tau^j_k)- (x_k,u_k)(T)|\le\tilde\varepsilon(j).  
\eeq 
\end{itemize}
\end{Definition}

\begin{Remark} {\rm
By Definition \ref{edsdef}  it follows  that, if $x$ is a BVS limit solution associated to $(u,v)$, then $u\in BV(T)$. 
Analogously,  when $x$ is a BV$_{loc}$S limit solution corresponding to $(u,v)$, Definition  \ref{edsdefnew}  implies that $u\in \ol{BV}_{loc}(T)$.}
\end{Remark}

\begin{Remark}\label{prec} 
{\rm  The S,  BVS, and BV$_{loc}$S  limit solution associated to a control $(u,v)$  is not unique, unless   the system is commutative.  Clearly,  any BVS limit solution is a BV$_{loc}$S  limit solution, which is an S limit solution, so that the sets of  S,  BV$_{loc}$S and BVS  limit solutions form a decreasing sequence of sets.}
\end{Remark}

\begin{Remark} {\rm  Following \cite{AR}, in the above definition  the approximating regular trajectories $x_k=x[\xb,\ub,u_k,v]$ are obtained keeping  the ordinary control $v$ fixed. This is in fact equivalent to consider approximating solution $x_k=x[\xb,\ub,u_k,v_k]$, where $v_k\to v$ in $L^1$-norm (see \cite{MS3}).}
\end{Remark}
  \begin{Remark}\label{condST} {\rm As we will see in Theorem  \ref{TST2}, 
  condition (\ref{cBVl}) guarantees that  a BV$_{loc}$S  limit solution $x$  is a BV$_{loc}$ graph completion solution on $[0,T]$, not only on $[0,T[$.   Actually, we will prove that   any   $x$   verifying  part (i) of Definition \ref{edsdefnew}   turns out to be a BV$_{loc}$ graph completion solution on $[0,T[$.

\noindent Condition  (\ref{cBVl}) is more meaningful once we read it as an hypothesis on the  the graphs of the approximating sequence $(x_k,u_k)_k$. Precisely, 
for any trajectory-control pair $(x_k,u_k,v)$ as in  Definition \ref{edsdefnew}, let $(\xi, \varphi_{0_k},\varphi_k, v\circ \varphi_{0_k}, S_k)$ be its arc-length graph parametrization (see Definition \ref{Dal}).  Then    (\ref{cBVl})   is equivalent to: 

\noindent {\it the existence of   a positive, decreasing map  $\tilde\varepsilon$ with $\lim_{s\to+\infty}\tilde\varepsilon(s)=0$ and  of two strictly increasing, diverging sequences $(\tilde s_j)_j\subset\R_+$  and  $(k_j)_j\subset\N$, $k_j\ge j$,  such that,  for every $k>k_j$: 
\bel{cBVe}
|(\xi_k,\varphi_k)(\tilde s_j)- (\xi_k,\varphi_k)(S_k)|\le\tilde\varepsilon(j).
\eeq}
Clearly, (\ref{cBVe}) holds true when the sequence $(\xi_k,\varphi_k)_k$ is uniformly convergent on $\R_+$ (by considering, for every $k$,  the  extension  $(\xi_k,\varphi_k)(s):=(\xi_k,\varphi_k)(S_k)$ for every $s\ge S_k$).}
\end{Remark}

As an immediate  consequence of Theorems \ref{Egc} and \ref{TST2}, we have the following existence result for BV$_{loc}$S limit solutions.
\begin{Corollary}  If $U$ has the Whitney property, then for  any $(u,v)\in \ol{BV}_{loc}(T)\times L^1(T)$ with $u(0)=\ub$   there exists an associated BV$_{loc}$S  limit solution $x$ to  \eqref{E}-\eqref{EO} on $[0,T[$ (on  $[0,T]$,  when $x$ is bounded).  \end{Corollary}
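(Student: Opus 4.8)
The plan is to derive the statement directly from the two central results already announced, namely the existence theorem for BV$_{loc}$ graph completion solutions and the equivalence theorem of Section \ref{ASS}. First I would invoke Theorem \ref{Egc}: since $U$ has the Whitney property and $(u,v)\in\ol{BV}_{loc}(T)\times L^1(T)$ with $u(0)=\ub$, there exist a BV$_{loc}$ graph completion $(\varphi_0,\varphi,\psi,+\infty)$ of $(u,v)$ and, after fixing any clock $\sigma$, an associated BV$_{loc}$ graph completion solution $x$ of \eqref{E}--\eqref{EO}. By Definition \ref{GClocS} this $x$ is defined on all of $[0,T]$ whenever the space-time trajectory $\xi=\xi[\xb,\ub,\varphi_0,\varphi,\psi]$ --- equivalently $x$ itself, since $x=\xi\circ\sigma$ --- is bounded, and on $[0,T[$ in general.

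Second, I would apply Theorem \ref{TST2}, which asserts that the class of BV$_{loc}$ graph completion solutions coincides with the class of BV$_{loc}$S limit solutions. As recalled in Remark \ref{condST}, the part of Definition \ref{edsdefnew} concerning the interval $[0,T[$ corresponds precisely to being a graph completion solution on $[0,T[$, while condition \eqref{cBVl} is what upgrades this to a solution on the closed interval $[0,T]$. Hence the map $x$ produced in the first step, being a BV$_{loc}$ graph completion solution associated to $(u,v)$, is automatically a BV$_{loc}$S limit solution associated to $(u,v)$ --- on $[0,T[$ in general, and on $[0,T]$ exactly in the case where $x$ is bounded. This is what the corollary claims, so no further work is required.

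The only point needing a little care is the bookkeeping of the two regimes ``$[0,T[$'' and ``$[0,T]$'': one must make sure that the version of Theorem \ref{TST2} invoked matches the boundedness status of $x$, so that in the bounded case the extra condition \eqref{cBVl} appearing in Definition \ref{edsdefnew}(ii) is genuinely supplied by the equivalence rather than assumed. Since all the substantive analysis --- the construction of the completion with loops in Lemma \ref{Legc}, and the two-sided passage between graph completion solutions and pointwise limits of Carath\'eodory solutions in Theorem \ref{TST2} --- is already carried out in those statements, the proof of the corollary is a one-line composition of the two. (A self-contained argument building the approximating sequence $u_k\in AC(T)$ directly from $(\varphi_0,\varphi,\psi)$ is of course also possible, but it would merely reproduce the proof of Theorem \ref{TST2} and is therefore unnecessary.)
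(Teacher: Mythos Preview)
Your proposal is correct and follows exactly the paper's approach: the corollary is stated there as ``an immediate consequence of Theorems \ref{Egc} and \ref{TST2}'', with no further proof given. Your two-step argument (existence of a BV$_{loc}$ graph completion solution via Theorem \ref{Egc}, then conversion to a BV$_{loc}$S limit solution via the well-posedness part of Theorem \ref{TST2}) is precisely what the authors intend.
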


As a by-product, we get that  every function  $u\in \ol{BV}_{loc}(T)$  is the  pointwise limit on $[0,T]$ of a sequence  $(u_k) \subset AC(T)$  with {\it equibounded variation} on every interval $[0,t]$ with $t<T$  and verifying  (\ref{cBVl}). 

\vv

 Let us conclude this section with an example, illustrating  the relations between the notions of  AC$_{loc}$  solutions,  BV$_{loc}$ graph completion solutions and of BV$_{loc}$S  limit solutions   considered in  Definitions \ref{DACloc},  \ref{GClocS}, and \ref{edsdefnew} above.

 \begin{Example}\label{EIntr} 
  {\rm   Let us consider  the control system in $\R^3$
\bel{es3}
\dot x =  g_1(x)\dot u_1+g_2(x)\dot u_2,  \qquad |u|\le 1 
\eeq
with $u\in \R^2$ and initial conditions
\bel{ic1}
x(0)=(1,0,1),\quad u(0)= (1,0),
\eeq
where  
$$
g_1(x){:=}\eta(x) \begin{pmatrix} 1\\0\\x_3x_2 \end{pmatrix},\quad 
g_2(x){:=} \eta(x) \begin{pmatrix} 0\\1\\-x_3x_1 \end{pmatrix},
$$
and $\eta$ is a Lipschitz continuous function equal to $1$ as  $|x|\leq 3$ and equal to $0$ as $|x|\geq 4$ \footnote{The multiplication by the {\em cut-off function}   $\eta$, while unneeded, is sufficient to guarantee the sublinearity hypothesis on the dynamics.}.
 \vv
 {\bf (i)} For any   control $u\in AC(T)$  verifying $u(0) = (1,0)$,  the corresponding  Carath\'eodory solution to (\ref{es3}), (\ref{ic1}) 
is 
$$
(x_1,x_2,x_3)(t)= \left(u_1(t),u_2(t), e^{-\int_0^t(-u_2\dot u_1 +u_1\dot u_2)(s)\,ds}\right)  \quad\forall t\in[0,T].
$$
In particular,
$ 
x_3(T)\ge e^{- Var_{[0,T]}(u)}>0.
$

Hence, if given a control $u\in BV(T)$   we consider just BVS  limit solutions    to (\ref{es3}), (\ref{ic1}), that is, pointwise limits of Carath\'eodory solutions  corresponding to  approximating  inputs $u_k$ with { \it equibounded variation} (see Definition \ref{edsdef}), 
we always obtain $x_3(T)>0$.
Similarly, if,  we introduce graph completions  $(\varphi_0,\varphi):[0,S]\to[0,T]\times U$  of $u$, with $S$ (and thus $Var_{[0,S]}(\varphi)$) finite, for any clock $\sigma$ we get a  graph completion solution with $x_3(T)>0$ (see Definitions \ref{GCloc},  \ref{GClocS}). Precisely, the space-time system is 
\bel{STes3}
(\xi_1,\xi_2,\xi_3)' =  g_1(\xi)\varphi_1'+g_2(\xi)\varphi'_2,  \quad \xi(0)=(1,0,1), \ \varphi(0)=(1,0),
\eeq
where $\varphi_0(0)=0$,  $\varphi_0(S)=T$,  $\varphi_0'\ge0$ and  $\varphi_0'+|\varphi'|=1$ a.e. on $[0,S]$,   so that,
\bel{es3xi}
 \xi_3(s)= e^{-\int_0^s(-\varphi_2 \varphi_1' +\varphi_1  \varphi_2')(s)\,ds} \quad \text{for } \,\,s\in[0,S],
\eeq 
and  $|\int_0^s(-\varphi_2 \varphi_1' +\varphi_1  \varphi_2')(s)\,ds|\le  S$. Thus  the graph completion solution, defined by $x=\xi\circ\sigma$,  
verifies $x_3(T)=
\xi_3(S)\ge e^{-S}>0$.

\vsm
Let us now consider inputs  $u\in\ol{AC}_{loc}(T)$.  In this case, if we  set, for instance,
\bel {ACLex}
u(t){:=} \left( \cos\left(\frac{1}{T-t}-\frac{1}{T}\right),\sin\left(\frac{1}{T-t}-\frac{1}{T}\right) \right)  \ \text{ for } t\in[0,T[, \ u(T)=(1,0),
\eeq
 the corresponding solution to (\ref{es3}), (\ref{ic1}) on $[0,T[$
has the third component
$x_3(t)=  e^{- \frac{t}{T(T-t)}}$,
  so that the  extension $(x_1,x_2,x_3)(T)\equiv (u_1,u_2,x_3)(T):=(1,0,0)$ gives  a feasible  AC$_{loc}$ trajectory-control pair (see Definition \ref{DACloc}). 
In fact,   such an extended  map $x$ is also    a BV$_{loc}$S limit solution (see Definition \ref{edsdefnew}). Indeed, for every $k$,  let us set  
\bel{acloc}
t_k:=\frac{2k\pi T^2}{1+2k\pi T}, \quad u_k(t):= u(t)\chi_{[0,t_k]}(t)+u(t_k)\chi_{]t_k,T]}(t), 
\eeq
where $u$ is as in (\ref{ACLex}), 
so that   $u(t_k)=(\cos(2k\pi),\sin(2k\pi))=(1,0)$. 
Then   $x$ is the
 pointwise limit of the Carath\'eodory solutions $x_k$ of (\ref{es3}), (\ref{ic1}) corresponding to 
the   controls $u_k\in AC(T)$, with
$
Var_{[0,t]}(u_k)\le \frac{t}{T(T-t)} \quad \forall t\in[0,T[
$
and  $(x _k,u_k)(T)=(x_k,u_k)(t_k)$,  so that easy calculations yield all the remaining conditions of Definition \ref{edsdefnew} below.  In particular \eqref{cBVl} is verified if we choose $\tilde s_j:=t_j+Var_{[0,t_j]}(u)$, where $t_j=\frac{2j\pi T^2}{1+2j\pi T}$,  and 
$k_j:=j$, so that if we set $\tau_k^j:=t_j,$ we get $\tau_k^j+Var_{[0,\tau_k^j]}(u_k)=t_j+Var_{[0,t_j]}(u)=\tilde s_j$ and,
for every $k\ge j$,  we have
$$|(x _k,u_k)(t_j)-(x _k,u_k)(T)|=|(x,u)(t_j)-(x,u)(t_k)|=|x(t_j)-x(t_k)|\le e^{- \frac{t}{T(T-t_j)}},$$
where the last term, independent of $k$, tends to zero as $j\to\infty.$

\vv
 {\bf (ii)} For $(x,u)$  solution of  
 (\ref{es3})-(\ref{ic1}), let us consider 
the  problem of minimizing the following payoff
 $$
 J(u):=\int_0^T[|1-u_1(t)|+|u_2(t)|+|x_3(t)||\dot u(t)| ]\,dt 
 $$
 subject to the constraints
 $$
 (x,u)(T)\in \C:=   (U\times\{0\})\times U.
 $$ 
 By {\bf (i)},  no AC trajectory-control pairs $(x,u)$ verifying the constraints exist, hence $\inf_{u\in AC(T)}J(u)=+\infty$.    In the extended class of    AC$_{loc}$  trajectory-control pairs, as observed in Remark \ref{Remmin},    the terminal constraint is equivalent to assume that
$$
 \liminf_{t\to T^-}d\big((x(t),u(t)),\, \C\big)=0.
$$
Hence, for every $k$,  implementing the control  
$$
u_k(t):=(1,0)\chi_{[0,T-(1/k)]}+  \left( \cos\left(\frac{1}{T-t}-k\right),\sin\left(\frac{1}{T-t}-k\right) \right)\chi_{[T-(1/k),T[}
$$
we get the solution 
$$
x_k(t)=(1,0,1)\chi_{[0,T-(1/k)]}+ \left(u_{1_k}(t),u_{2_k}(t), e^{k-\frac{1}{T-t}}\right)\chi_{[T-(1/k),T[},
$$
with $(x_k,u_k)$  verifying the constraints and  
$ 1\le J(u_k)\le 1 +\frac{3}{k},$ 
 so that  $\lim_{k}J(u_k)=1$. In fact, it is not difficult to prove that 1 is the infimum (but not the minimum)  cost  in the class of AC$_{loc}$ controls.  The minimum   does exist, and is equal to 1, over the   set of BV$_{loc}$ graph completions: it suffices  to consider the space-time control
 \bel{ex1f1}
 (\varphi_0,\varphi)(s):=(s,1,0)\chi_{[0,T[}(s)+(T,(\cos(s-T),\sin(s-T))\chi_{[T,+\infty[}(s)
\eeq
 and the corresponding trajectory
\bel{ex1f2}
 \xi(s)=(1,0,1)\chi_{[0,T[}(s)+(\cos(s-T),\sin(s-T),e^{-s+T})\chi_{[T,+\infty[}(s).
 \eeq
 Notice that, by adding to the system the variable
\bel{ex1f3}
 \dot x_4=|1-u_1(t)|+|u_2(t)|+|x_3(t)||\dot u(t)|, \qquad x_4(0)=0
\eeq
  in the space-time setting we can consider the {\it extended} payoff
 $$
 {\mathcal J}(\varphi_0,\varphi, S):=\int_0^S[(|1-\varphi_1(s)|+|\varphi_2(s)|)\varphi_0'(s)+|\xi_3(s)||\varphi'(s)|]\,ds,
 $$
 where $S\le+\infty$ and $\lim_{s\to S}\varphi_0(s)=T$. Hence by   \eqref{ex1f1},   \eqref{ex1f2}, we get ${\mathcal J}(\varphi_0,\varphi, +\infty)=1$.
 Finally,  in the class of $S$  limit solutions, where   the optimization problem is equivalent to  minimize $x_4(T)$,   the minimum cost  is still equal to 1. In particular,  for every sequence  $(x_k,u_k)_k$ of equibounded, absolutely continuous maps defining an  $S$  limit solution verifying the terminal constraint, one has $\lim_k Var_{[0,T]}(u_k)=+\infty$ and 
 $$
 x_{4_k}(T)=J(u_k)\ge \int_0^T e^{-\int_0^t|\dot u_k|\,dr}\,|\dot u_k|\,dt=1-e^{-Var_{[0,T]}(u_k)}\to 1 \quad\text{as $k\to+\infty$.}
 $$ 
   Actually,  in view of Theorem \ref{TST2} below,   the minimum value  is obtained in the subset of BV$_{loc}$S limit solutions (see Definition \ref{edsdefnew}).
   }
\end{Example} 

\section{Well posedness and characterization}\label{ASS}
 Our main result is the following equivalence between  BV$_{loc}$ graph completion solutions  and   BV$_{loc}$S  limit solutions.
 \begin{Theorem} \label{TST2} Let us assume that $U$ has the Whitney property. Let  $(u,v)\in \ol{BV}_{loc}(T)\times L^1(T)$ with $u(0)=\ub$. Then
 \begin{itemize}
 \item[(i)] {\rm (Well posedness)} a {\rm BV}$_{loc}$  graph completion solution $x$  to  \eqref{E}-\eqref{EO} is a {\rm BV}$_{loc}$S  limit solution;
 \item[(ii)] {\rm (Characterization)} Any {\rm BV}$_{loc}$S limit solution $x$  to  \eqref{E}-\eqref{EO} is a {\rm BV}$_{loc}$  graph completion solution.
 \end{itemize}   
 \end{Theorem}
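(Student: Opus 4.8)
The plan is to prove the two inclusions separately, in each case using the arc-length graph parametrization (Definition \ref{Dal}) as a bridge between the two worlds, together with the uniform convergence result of Proposition \ref{xiv}. For part (i), start from a BV$_{loc}$ graph completion $(\varphi_0,\varphi,\psi,S)$ of $(u,v)$ with clock $\sigma$ and the associated solution $\xi=\xi[\xb,\ub,\varphi_0,\varphi,\psi]$, so that $x=\xi\circ\sigma$. Since $S=+\infty$ is the only genuinely new case (the case $S<+\infty$ is classical BV theory, via \cite{BR}, \cite{MR}, \cite{AR}), I would fix increasing diverging pseudo-times and, for each $j$, approximate the curve $(\varphi_0,\varphi)$ on the compact interval $[0,\tilde s_j]$ by a space-time control $(\varphi_{0_h},\varphi_h)$ with $\varphi_{0_h}$ strictly increasing and $\varphi_{0_h}(\tilde s_j)=\varphi_0(\tilde s_j)=:\tilde T_j$ — this is the standard trick of tilting the flat stretches of $\varphi_0$ by a slope $1/h$ and renormalizing so that $\varphi_{0_h}'+|\varphi_h'|=1$. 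By Proposition \ref{xiv}, the corresponding $\xi_h$ converge uniformly to $\xi$ on $[0,\tilde s_j]$, and reparametrizing by $\varphi_{0_h}^{-1}$ (Proposition \ref{REmb}(ii)) yields genuine Carath\'eodory trajectories $x_{h}$ of \eqref{E}--\eqref{EO} driven by absolutely continuous controls $u_h$. A diagonal extraction over $j$ produces a single sequence $(u_k,x_k)$ with: equibounded variation of $u_k$ on every $[0,t]$, $t<T$ (because $Var_{[0,t]}(u_k)\le$ the length of $\varphi$ up to the relevant pseudo-time, which is finite and essentially uniform in $k$); pointwise convergence $x_k\to x$ on $[0,T[$; and equiboundedness (from the a priori bound (iv) and Gronwall). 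The final endpoint condition \eqref{cBVl} is arranged from condition iii) of Definition \ref{GCloc}: choosing $\tilde s_j$ so that $(\varphi_0,\varphi)(\tilde s_j)$ is close to $(T,u(T))$, and $\tilde s_j$ a continuity point of the flat structure, the points $(x_k,u_k)(\tau^j_k)$ with $\tau^j_k:=\varphi_{0_k}^{-1}(\varphi_0(\tilde s_j))$ satisfy $\tau^j_k+Var_{[0,\tau^j_k]}(u_k)=\tilde s_j$ and track $(\xi,\varphi)(\tilde s_j)$, hence $(x,u)(T)$, up to an error $\tilde\varepsilon(j)\to 0$. This gives the BV$_{loc}$S limit solution, matching the endpoint by Definition \ref{GClocS}(ii).

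For part (ii), run the construction backwards. Given a BV$_{loc}$S limit solution $x$ with approximating data $(u_k,v)$, form for each $k$ the arc-length graph parametrization $(\xi_k,\varphi_{0_k},\varphi_k,v\circ\varphi_{0_k},S_k)$ (Definition \ref{Dal}), so $\varphi_{0_k}'>0$ a.e. and identity \eqref{ids} holds. The key compactness step is that on any compact pseudo-time interval $[0,\bar s]$ the maps $(\varphi_{0_k},\varphi_k,\xi_k)$ are uniformly $1$-Lipschitz (the first two by construction, $\xi_k$ by the sublinear bound (iv) and Gronwall on a time horizon $<T$ forced by the equibounded variation on $[0,t]$), hence by Ascoli a subsequence converges uniformly on every $[0,\bar s]$, and by a further diagonal extraction on $\R_+$, to a limit $(\varphi_0,\varphi,\xi)$ which is $1$-Lipschitz with $\varphi_0'+|\varphi'|=1$ a.e. (the last passing to the limit because the $L^1$ bound on derivatives together with $\varphi_0'\ge 0$ and weak-* convergence preserves the constraint on the flat/slanted decomposition — here one has to be a little careful, but the constraint $\varphi_0'+|\varphi'|\le 1$ passes by lower semicontinuity of variation and the reverse inequality follows from \eqref{ids}). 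Then $\xi$ solves \eqref{SEaff} for $(\varphi_0,\varphi,\psi)$ where $\psi$ is any measurable selection realizing the clock, because $\varphi_0$ need no longer be strictly increasing in the limit; this requires invoking Proposition \ref{xiv} once more, now with $(\varphi_{0_h},\varphi_h)=(\varphi_{0_k},\varphi_k)$ and the limit $(\varphi_0,\varphi)$, on each compact $[0,\tilde S]$ where $\varphi_0(\tilde S)=\tilde T<T$, to transfer the ODE to the limit. One checks that $(\varphi_0,\varphi,\psi,+\infty)\in\U(T;\ub)$ is a BV$_{loc}$ graph completion of $(u,v)$: property i) holds because each $t<T$ is $\varphi_{0_k}(s^k_t)$ with $\varphi_k(s^k_t)=u_k(s^k_t)$ and $u_k\to u$; property iii), i.e. $\lim_j\varphi(\tilde s_j)=u(T)$, comes directly from \eqref{cBVe} (the reformulation of \eqref{cBVl} in Remark \ref{condST}) after passing to the limit in $k$. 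Defining the clock $\sigma$ from $\sigma_k$ (or more safely by $\sigma(t):=\sup\{s:\varphi_0(s)\le t\}$-type formula and checking $(\varphi_0,\varphi)(\sigma(t))=(t,u(t))$) and using Proposition \ref{xiv} to identify $x(t)=\lim_k x_k(t)=\xi(\sigma(t))$ on $[0,T[$ completes the construction of a BV$_{loc}$ graph completion solution on $[0,T[$; the endpoint $(x,u)(T)\in(\xi,\varphi)_{set}(+\infty)$ follows again from \eqref{cBVe} together with boundedness of $\xi$.

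The main obstacle I anticipate is the passage to the limit in part (ii): controlling $\xi_k$ uniformly on $\R_+$-compacts requires the time horizons $\varphi_{0_k}(\bar s)$ to stay bounded away from $T$ uniformly in $k$, which is exactly why the equibounded-variation-on-$[0,t]$ hypothesis of Definition \ref{edsdefnew}(i) is needed; and the reparametrization argument identifying the limiting $\xi$ with the space-time solution of \eqref{SEaff} must handle the fact that flat pieces of $\varphi_0$ can appear in the limit even though each $\varphi_{0_k}$ is strictly increasing (loops are created in the limit). This is precisely the phenomenon Proposition \ref{xiv} is designed to absorb, so the argument reduces to checking its hypotheses on each compact window and patching. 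A secondary, more bookkeeping-type difficulty is aligning the two diagonal extractions (over $j$ and over $k$) so that \eqref{cBVl} survives; choosing the $\tilde s_j$ from the graph completion first and only then extracting subsequences of $(u_k)$ keeps this manageable. The case $S<+\infty$ (equivalently $u\in BV(T)$ with a finite graph completion) in both directions reduces to the known BV equivalence results of \cite{AR}, \cite{AMR}, which I would cite rather than reprove.
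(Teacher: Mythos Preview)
Your overall architecture matches the paper's: part (i) via approximating the clock/graph completion by strictly increasing reparametrizations and invoking Proposition \ref{xiv}, part (ii) via arc-length parametrizations of the $(x_k,u_k)$, Ascoli--Arzel\`a on compacts, and the endpoint condition \eqref{cBVe}. Part (ii) in particular is essentially the paper's argument; your remark that the limit only satisfies $\varphi_0'+|\varphi'|\le 1$ (not $=1$) and must be reparametrized at the end is exactly what the paper does---your earlier suggestion that the reverse inequality ``follows from \eqref{ids}'' is not right (variation can strictly drop under uniform limits), but since you immediately retreat to reparametrization this is only a wording issue.

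There is, however, a real gap in your plan for part (i). Your tilting-on-$[0,\tilde s_j]$ produces $\varphi_{0_h}:[0,\tilde s_j]\to[0,\tilde T_j]$ with $\tilde T_j<T$, so the reparametrized control $u_h=\varphi_h\circ\varphi_{0_h}^{-1}$ lives only on $[0,\tilde T_j]$, not on $[0,T]$. To obtain $u_k\in AC(T)$ you must extend to $[0,T]$ with $u_k(T)=u(T)$, and this is precisely where the Whitney property is used in the paper (to bridge $\varphi(\tilde s_j)$ to $u(T)$ with controlled variation). Without this step your diagonal sequence is not defined at $T$, the endpoint convergence $x_k(T)\to x(T)$ is not available, and condition \eqref{cBVl} has no content. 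The paper handles this by a dedicated approximation lemma (Theorem \ref{Cl1-2}) that builds strictly increasing $\sigma_h:[0,T[\to\R_+$ \emph{onto} $\R_+$, converging pointwise to the clock $\sigma$, with $1$-Lipschitz inverses $\varphi_{0_h}$ converging locally uniformly to $\varphi_0$; then sets $u_h:=\varphi\circ\sigma_h$ on $[0,T[$, truncates at a carefully chosen $\tau_{h_j}^j<T$, and closes up with a Whitney arc to $u(T)$. This lemma also forces the paper to split into two cases according to whether $\bar S:=\inf\{s:\varphi_0(s)=T\}$ is finite or $+\infty$ (in the first case $(x,u)$ jumps at $T$ and the construction is genuinely different), a distinction your plan does not make. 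Your tilting intuition is the right picture, but it does not by itself deliver the global approximating controls on $[0,T]$; you should either prove an analogue of Theorem \ref{Cl1-2} or explicitly add the truncation-plus-Whitney-bridge step and verify the equiboundedness and \eqref{cBVl} estimates for the resulting $(x_k,u_k)$.
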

  
Theorem \ref{TST2} says that any BV$_{loc}$ graph completion solution is an S limit solution. Vice-versa, given an S limit solution $x$, 
 it is a BV$_{loc}$ graph completion solution if and only if there exists an approximating sequence verifying condition (\ref{cBVl}). Precisely,   $x$ is always a BV$_{loc}$ graph completion solution on $[0,T[$:  (\ref{cBVl})  is needed to guarantee  the existence of  a BV$_{loc}$ graph completion solution {\it assuming the final value} $x(T)$. 
  
\vsm  
In order to prove that a BV$_{loc}$ graph completion solution  is a   BV$_{loc}$S  limit solution,  in Theorem \ref{Cl1-2}  below we extend  to possibly unbounded  maps the crucial approximation result of \cite[Theorem 5.1]{AR}. The proof is postponed to  Section \ref{proofs}. 

\begin{Theorem}\label{Cl1-2}
Let $\sigma:[0,T[\to\R_+$ be a strictly increasing map such that 
$$
\text{$\sigma(0)=0$ \qquad  and } \qquad \sigma(t_2)-\sigma(t_1)>(t_2-t_1) \quad \forall t_1,\, t_2\in[0,T[, \quad  t_1< t_2.
$$
Set
$$
 \lim_{t\to T^-}\sigma(t)=:\bar S \qquad (\bar S\le+\infty).
 $$
 Let $\varphi_0:\R_+\to[0,T[$ be the unique, (1-Lipschitz continuous) increasing, surjective map  verifying
$$
 \varphi_0\circ\sigma(t)=t \quad \forall t\in[0,T[, \quad \text{and, if $\bar S<+\infty$, } \quad \varphi_0(s):=T \quad \forall s\ge \bar S.
$$ 
 Then there exists a sequence of absolutely continuous, strictly increasing  maps $\sigma_h:[0,T[\to\R_+$ such that 
\begin{itemize}
\item[(i)]  $\sigma_h(0)=0$,   $\lim_{t\to T}\sigma_h(t)=+\infty$,   and
\bel{cp}
\lim_{h}\sigma_h(t)=\sigma(t) \quad \forall t\in[0,T[;
\eeq
\item[(ii)]  the maps $\varphi_{0_h}:=\sigma_h^{-1}:\R_+\to[0,T[$ are strictly increasing, $1$-Lipschitz continuous,  surjective, converge locally uniformly to $\varphi_0$  
and   verify, for every $t\in]0, T[$,  
\begin{itemize}
\item[(ii.1)] if $\bar S<+\infty$, setting  $\varepsilon(h):=\sup_{s\in[0,\bar S]}|\varphi_{0_h}(s)-\varphi_0(s)|$: 
 $$
\sup_{s\in\R_+}|\varphi_{0_h}(s)-\varphi_0(s)|\le \varepsilon(h)+(T-t_h)  \qquad(t_h:=\varphi_{0_h}(\bar S))
$$
and  $t_h<T$ for every $h$,  $\lim_h t_h=T$,   and  $\lim_h\varepsilon(h)=0$;
\item[(ii.2)] if $\bar S=+\infty$: for every $S\in]0,+\infty[$, setting 
 $t:=\varphi_{0}(S)$  and  $t_h:=\varphi_{0_h}(S)$, 
$$
\sup_{s\in\R_+}|\varphi_{0_h}(s)-\varphi_0(s)|\le \varepsilon_S(h)+(T-\min\{t,t_h\}), 
$$
where   $\varepsilon_S(h):=\underset{s\in[0,S]}{\sup}|\varphi_{0_h}(s)-\varphi_0(s)|$, $t,t_h<T$ for every $h$,  $|t_h-t|\le \varepsilon_S(h)$  and $\lim_h\varepsilon_S(h) =0$. 
\end{itemize} 
\end{itemize}
 \end{Theorem}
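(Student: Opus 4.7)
The plan is to regularize $\sigma$ into an absolutely continuous strictly increasing $\sigma_h$ that also blows up at $T$, and then read off all properties for the inverses. Since the hypothesis $\sigma(t_2)-\sigma(t_1)>t_2-t_1$ is equivalent to saying $\rho(t):=\sigma(t)-t$ is strictly increasing with $\rho(0)=0$, I would first construct absolutely continuous non-decreasing approximants $\rho_h:[0,T[\to\R_+$ of $\rho$. The function $\rho$ has at most countably many jump points $\{\tau_j\}$ with jump sizes $d_j>0$, together with an underlying continuous non-decreasing part. For each $h$, one picks pairwise disjoint intervals $I_{h,j}\subset[0,T[$ of small length $\delta_{h,j}$ positioned asymmetrically around $\tau_j$ so that, replacing the jump at $\tau_j$ by a linear rise of slope $d_j/\delta_{h,j}$ over $I_{h,j}$, the exact value $\rho(\tau_j)\in[\rho(\tau_j^-),\rho(\tau_j^+)]$ is attained at $t=\tau_j$. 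The continuous part is then approximated (e.g.\ by convolution with a mollifier of scale $1/h$) on the complement of $\bigcup_j I_{h,j}$, with $\sum_j \delta_{h,j}$ summable. The result is absolutely continuous, non-decreasing, with $\rho_h(0)=0$ and $\rho_h\to\rho$ pointwise on $[0,T[$.

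With this in hand, define
\[
\sigma_h(t):=t+\rho_h(t)+\frac{t}{h(T-t)},\qquad t\in[0,T[.
\]
The last term blows up at $t=T$ and vanishes as $h\to\infty$ for fixed $t$, so $\sigma_h$ is absolutely continuous with $\sigma_h'(t)\ge 1+\tfrac{T}{h(T-t)^{2}}>1$ a.e., $\sigma_h(0)=0$, $\lim_{t\to T^-}\sigma_h(t)=+\infty$, and $\sigma_h(t)\to\sigma(t)$ pointwise on $[0,T[$, yielding (i). The inverse $\varphi_{0_h}:=\sigma_h^{-1}:\R_+\to[0,T[$ is then strictly increasing, surjective, and $1$-Lipschitz. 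Pointwise convergence of $\sigma_h$ combined with the standard fact that inverses of pointwise-convergent monotone functions converge at continuity points of the limit inverse (and $\varphi_0$ is continuous by being $1$-Lipschitz) gives $\varphi_{0_h}\to\varphi_0$ pointwise on $\R_+$; equi-Lipschitzness with constant $1$ upgrades this to locally uniform convergence.

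For the sup estimates, suppose first $\bar S<+\infty$ and set $t_h:=\varphi_{0_h}(\bar S)\in[0,T[$. Since $\sigma_h(t_h)=\bar S$ while for every fixed $t<T$ one has $\sigma_h(t)\to\sigma(t)<\bar S$, any subsequential limit $t^{\ast}<T$ of $(t_h)_h$ would force $\sigma(t^{\ast})\ge\bar S$, contradicting the definition of $\bar S$; hence $t_h\to T^-$. For $s\ge\bar S$, $\varphi_0(s)=T$ and $\varphi_{0_h}(s)\in[t_h,T[$, so $|\varphi_{0_h}(s)-\varphi_0(s)|\le T-t_h$, while on $[0,\bar S]$ the equi-Lipschitz property and pointwise convergence yield $\varepsilon(h)\to 0$ by a standard Arzel\`a--Ascoli/Dini argument, giving (ii.1). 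The case $\bar S=+\infty$ in (ii.2) is analogous: fix $S<+\infty$ and apply local uniform convergence on $[0,S]$ to get $\varepsilon_S(h)\to 0$; then $|t_h-t|=|\varphi_{0_h}(S)-\varphi_0(S)|\le\varepsilon_S(h)$, and for $s\ge S$ both $\varphi_{0_h}(s)\ge t_h$ and $\varphi_0(s)\ge t$ lie in $[\min\{t,t_h\},T]$, bounding their difference by $T-\min\{t,t_h\}$.

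The main technical hurdle will be the construction of $\rho_h$ in the first paragraph: one must simultaneously smooth a countable family of jumps while preserving the exact value $\rho(\tau_j)$ attained at each $\tau_j$, which requires choosing the offset of each smoothing window $I_{h,j}$ according to the relative position of $\rho(\tau_j)$ inside $[\rho(\tau_j^-),\rho(\tau_j^+)]$ and coordinating the $\delta_{h,j}$ to stay summable while still shrinking to $0$. Once this regularization is in place, the remaining arguments (inverse convergence, $1$-Lipschitz bound, identification of $t_h$) are routine monotone-function and equi-Lipschitz bookkeeping.
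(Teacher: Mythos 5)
Your overall strategy is sound and genuinely different from the paper's. You reduce to approximating the non-decreasing map $\rho=\sigma-\mathrm{id}$, then add the global correction $t/\bigl(h(T-t)\bigr)$, which simultaneously forces $\sigma_h'\ge 1+T/\bigl(h(T-t)^2\bigr)>1$ a.e., guarantees $\lim_{t\to T^-}\sigma_h(t)=+\infty$, and handles the cases $\bar S<+\infty$ and $\bar S=+\infty$ in one stroke. The paper instead splits into two cases: for $\bar S<+\infty$ it convolves an odd/even reflected extension of $\sigma$ on $[-T,2T]$ with an even mollifier (so that $\tilde\sigma_h(0)=0$ and $\tilde\sigma_h(T)=\bar S$ come for free), then grafts an explicit blow-up tail $s_h\sqrt{(T-\bar t_h)/(T-t)}$ onto $[\bar t_h,T[$; for $\bar S=+\infty$ it pieces together local convolutions on a partition $(\bar t_i)_i$ of $[0,T[$. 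Your treatment of the inverses (pointwise convergence of $\varphi_{0_h}$ at continuity points of $\varphi_0$, equi-Lipschitz upgrade to local uniform convergence, and the $t_h$-based decomposition of $\sup_{s\in\R_+}|\varphi_{0_h}-\varphi_0|$) matches the paper's in substance and is correct; what your scheme buys is the removal of the case split and a cleaner monotonicity argument.

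However, the step you label as ``the main technical hurdle'' is exactly the piece the paper does not prove from scratch either: after the convolution one only gets $\tilde\sigma_h(t)\to\frac{1}{2}\bigl(\sigma(t^+)+\sigma(t^-)\bigr)$, and the paper delegates the repair at jump points to \cite[Theorem 5.1]{AR}, which constructs strictly increasing approximants converging to the \emph{actual} value $\sigma(t)$ at each discontinuity. Your asymmetric-window construction of $\rho_h$ is the right idea and is in the same spirit as that cited result, but as written it remains a sketch: one still has to arrange pairwise-disjoint windows $I_{h,j}$ around a possibly dense, countable set of jumps while keeping $\sum_j\delta_{h,j}$ small; keep the ramp speeds integrable so that the jump-part approximant is absolutely continuous; handle the decomposition $\rho=\rho_c+\rho_j$ near $t=0$ so that $\rho_h(0)=0$ exactly (the paper achieves this with an odd reflection, which you do not mention); and verify that the resulting sum is still non-decreasing after mollifying $\rho_c$ on the complement of the windows. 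None of this is conceptually hard, but it is precisely the content of the lemma the theorem rests on, so the proposal is a valid and arguably cleaner plan rather than a complete proof: the key approximation lemma would still need to be proved, or delegated to \cite[Theorem 5.1]{AR} as the paper does.
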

  Let us point out that, even in case $\sigma([0,T[)$ is   bounded, we introduce approximating maps $\sigma_h$     from $[0,T[$ {\it onto} $\R_+$.   This is a substantial difference from  \cite[Theorem 5.1]{AR}, where $\sigma\in L^1([a,b],[0,1])$ and every approximation  $\sigma_h$ maps   $[a,b]$ onto $[0,1]$.

\subsection{Proof of Theorem \ref{TST2}: Well posedness}\label{dim1}

   Let us begin by showing that a   BV$_{loc}$ graph completion solution is a BV$_{loc}$S limit solution.
 We limit ourselves to consider just BV$_{loc}$ graph completions which are not BV, since this last case was already covered by  \cite[Theorem 4.2]{AR}.  Let $x$ be a BV$_{loc}$  graph completion solution to  \eqref{E}-\eqref{EO},  which, by Definitions \ref{fsttc} and \ref{GClocS},  is  associated to a feasible  space-time trajectory-control pair  $(\varphi_0,\varphi,\psi, +\infty)\in \U(T;\ub)$,  $\xi:=\xi[\xb,\ub, \varphi_0,\varphi,\psi]$  with  $\xi$ bounded,  and to a strictly increasing function $\sigma:[0,T[\to\R_+$, such that: 
\bel{fsj}
\left\{\begin{array}{l}
(\xi,\varphi_0,\varphi,\psi)\circ\sigma(t)=(x(t),t, u(t),v(t))   \qquad \forall t\in[0,T[, \\ \, \\
\lim_j(\xi(s_j),\varphi(s_j))=(x(T), u(T)) \quad \text{for some \ }s_j\nearrow +\infty.
\end{array}\right.
\eeq
Let
\bel{bS}
\bar S:=\inf\{s>0: \ \varphi_0(s)=T\}.
\eeq 
We consider separately the two cases $\bar S=+\infty$ and $\bar S<+\infty$, since they require a  different construction  of  the  equibounded,   approximating sequence $(x_k,u_k)_k$ of $(x,u)$. Precisely we will prove the following
\vsm
\noindent{\sc \underline{Claim}:} 
There exists a sequence $(u_k)_k\subset AC(T)$, $u_k(0)=\ub$, and $x_k:=x[\xb,\ub, u_k,v]$ verifying these properties:
\begin{itemize}
\item[(i)] for every $t\in[0,T]$, 
\bel{Thst}
|(x_k,u_k)(t)- (x,u)(t)|+ \|(x_k,u_k)- (x,u)\|_{L^1(T)}\to 0 \ \ \text{as} \ \ k\to+\infty,
\eeq
\item[(ii)]  there exists an increasing function $V:[0,T[\to\R_+$ with $V(0)=0$ and $\lim_{t\to T}V(t)=+\infty$, such that, for every $k$, 
\bel{Var}
Var_{[0,t]}(u_k)\le V(t) \qquad \text{for every $t\in]0,T[$;}
\eeq
\item[(iii)] in correspondence to the sequence $(s_j)_j$ in \eqref{fsj},  there exist     a positive, decreasing sequence  $\tilde\varepsilon$ with $\lim_j\tilde\varepsilon(j)=0$ and  a strictly increasing sequence $(k_j)_j$ $(k_j\ge j)$ such that, defining implicitly   $ \tau^j_k$    by
$$
\tau^j_k+ Var_{[0,\tau^j_k]}(u_k)=s_j,
$$
 one has
$$
|(x_k,u_k)(\tau^j_k)- (x_k,u_k)(T)|\le\tilde\varepsilon(j),
$$ 
\end{itemize}
\vsm
\noindent  so  that $(x,u,v)$ is a BV$_{loc}$S  limit solution on $[0,T]$.   
 
 \noindent In both cases, as a first step,  using Theorem \ref{Cl1-2}   we define a sequence of strictly increasing, Lipschitzean  maps $\varphi_{0_h}$ approaching locally uniformly $\varphi_0$ as $h\to\infty$ and  consider the trajectory-control pairs   $(\xi,\varphi)\circ(\varphi_{0_h})^{-1}$.  
Furthermore, we obtain an equibounded subsequence belonging to  ${AC}(T)$  by truncating and then carefully modifying  the (non BV) controls  $\varphi\circ\varphi_{0_h}^{-1}$,  using the property (\ref{hCA}).   
 Notice that
 $$
 \lim_{t\to T^-}\sigma(t)=\bar S.
 $$
In particular,  when $\bar S<+\infty$  the pair $(x,u)$ has a jump at the final time $t=T$ from $(x,u)(T^-)=(\xi,\varphi) (\bar S)$ to $(x,u)(T)$ and Var$_{[\bar S, +\infty[}(\varphi)=+\infty$. 

\vsm 
\noindent{\sc \underline{Case 1}:} let $\bar S<+\infty$. In view of Theorem \ref{Cl1-2},    there exists a sequence of absolutely continuous, strictly increasing functions  $\sigma_h$  from $[0,T[$ onto $ \R_+$ and pointwisely converging to $\sigma$ such that, for every $h$, 
  the maps $\varphi_{0_h}:=\sigma_h^{-1}:\R_+\to[0,T[$ are strictly increasing, $1$-Lipschitz continuous,  surjective 
and they verify, for every $h$, 
\bel{t1}
\sup_{s\in\R_+}|\varphi_{0_h}(s)-\varphi_0(s)|\le \varepsilon(h)+(T-t_h), 
\eeq
where $t_h:=\varphi_{0_h}(\bar S)\nearrow T$, $\varepsilon(h):=\sup_{s\in[0,\bar S]}|\varphi_{0_h}(s)-\varphi_0(s)|$ and  $\lim_h\varepsilon(h)=0$.
\newline
Let us  define
$$(x,u,v):=  (\xi,\varphi,\psi)\circ\sigma,
$$
$$
u_h:=\varphi\circ\sigma_h,\,\, x_h:=x[\xb,\ub,u_h,v],\,\,
\xi_h:= \xi[\xb,\ub,\varphi_{0_h}, \varphi,v\circ\varphi_{0_h}].
$$
Clearly,  $x_h=\xi_h\circ\sigma_h$ on $[0,T[.$   Let $(s_j)_j$ be as in \eqref{fsj} and for every $j$, $h$, let us set
\bel{t2}
 \tau^j_h:=\varphi_{0_h}(s_j).
\eeq
Since $s_j\nearrow+\infty$, it is not restrictive to assume $s_j>\bar S$ for every $j$; hence,  for every $h$ the sequence $(\tau^j_h)_j$ is strictly increasing   and, for every $j$, 
$$
t_h\le \tau^j_h<T  \quad\text{and}\quad \lim_h \tau^j_h=\lim_h t_h=T.
$$
In order to construct an equibounded trajectory-control sequence   verifying   (\ref{Thst}) and (\ref{Var}),  let us preliminary notice that, for every $j$, by Proposition \ref{xiv}  we have, for any $h$,
\bel{xij}
\sup_{s\in[0,s_j]}|\xi_h(s)-\xi(s)|=: \varepsilon_j^1(h),
\eeq
 with $\varepsilon_j^1(h)\le \varepsilon_{j+1}^1(h)$ and $\lim_h \varepsilon_j^1(h)=0$.  We define  a  sequence 
  $(h_j)_j$  as follows. Choose  $h_1\ge 1$ verifying 
 $\varepsilon^1_1(h)\le 1$ for every $h\ge h_1$  and
for  any $j>1$  let  $h_j>h_{j-1}$ ($\ge j-1$) be such that 
\bel{xijfrac}
\varepsilon(h), \ \varepsilon^1_j(h)\le \frac{1}{j} \quad\text{ for every $h\ge h_j.$}
\eeq
Using  the Whitney property (\ref{hCA}),   let us set
\bel{sAC}
\begin{array}{l}
u_j:=u_{h_j }\chi_{[0,\tau_{h_j}^j]}+\tilde u_j\left(\frac{t-\tau_{h_j}^j}{T-\tau_{h_j}^j}\right)\chi_{]\tau_{h_j}^j,T]},  \\ \, \\
x_j:=x[\xb,\ub,u_j,v],
\end{array}
\eeq
 where  $\tilde u_j\in AC(1)$ joins  $u^j_{h_j}(\tau_{h_j}^j)=\varphi(s_j)$ to  $u(T)$ and $Var_{[0,1]}(\tilde u_j)\le C| \varphi(s_j)- u(T)|$.  
 Since  $u_j(T)=u(T)$ for every $j$, $\lim_j u_j(T)=u(T)$ trivially. If $t\in[0,T[$,  there is some $j$ such that $t\le   \tau_{h_j}^j$ and we have 
\bel{luh}
\lim_{j}u_{h_j}(t)=\lim_{h_j}\varphi(\sigma_{h_j}(t))=\varphi(\sigma(t))=u(t),
\eeq
recalling that  $\varphi$ is a ($1$-Lipschitz) continuous function.  Moreover,  
$$
\begin{array}{l}
|x_j(t)-x(t)|=|\xi_{h_j}(\sigma_{h_j}(t))-\xi(\sigma(t))|\le \\ \, \\
\qquad\qquad \qquad |\xi_{h_j}(\sigma_{h_j}(t))-\xi(\sigma_{h_j}(t))| + |\xi(\sigma_{h_j}(t))-\xi(\sigma(t))|, 
\end{array}
$$
where  $\sigma_{h_j}(t)\in[0,s_j]$ and
$$
 |\xi_{h_j}(\sigma_{h_j}(t))-\xi(\sigma_{h_j}(t))| \le\sup_{s\in[0,s_j]}|\xi_{h_j}(s)-\xi(s)|\le 1/j.
 $$
 Since $\xi$ is continuous, this implies that  $\lim_j x_j(t)=x(t)$ for every $t\in[0,T[$.  

\noindent Let $t\in[0,T[$. To prove the existence of a function $V$ such that (\ref{Var}) holds true, 
notice that   $\lim_j\sigma_{h_j}(t)=\sigma(t) <+\infty$ (actually, $\sigma(t)\le \bar S$). Therefore,  $\sigma_{h_j}(t)\le \sigma(t)+1$ for every $j>   j(t)$ for some  integer $j(t)$  and 
$$
\sigma_{h_j}(t)\le \sigma(t)+M(t), \quad\text{if \ } M(t):=\max\{1, \max\{\sigma_{h_j}(t): j=1,\dots,  j(t)\}\}<+\infty.
$$
  By the above estimate, for any $j$ such that $t\le \tau_{h_j}^j$,  we get
$$
 Var_{[0,t]}(u_j)=  Var_{[0, \sigma_{h_j}(t)]}(\varphi)\le Var_{[0, \sigma(t)+M(t)]}(\varphi),
 $$
while if $t>\tau_{h_j}^j$, 
$$
\begin{array}{l}
 Var_{[0,t]}(u_j)\le  Var_{[0,t]}(u_{h_j})+\int_{ \tau_{h_j}^j}^t|\dot u_j(t)|\,dt \le \\ \, \\
   Var_{[0,\sigma(t)+M(t)]}(\varphi)+C\,\text{diam}(U).
 \end{array}
 $$
 Therefore  $(u_j)_j$ verifies (\ref{Var}) if we choose
 \bel{TV}
 V(t):=Var_{[0,\sigma(t)+M(t)]}(\varphi)+C\,\text{diam}(U) \quad \forall t\in[0,T[.
\eeq
Let us now prove that the sequence $(x_j)_j$ is equibounded. In view of the boundedness of $\xi$ and $x$ and of the previous estimates,  we get
 $$
  \sup_{t\in[0,  \tau_{h_j}^j]}|x_j(t)|\le \sup_{t\in[0,T[}|x(t)|+2\sup_{s\ge 0}|\xi(s)|+(1/j)\le \sup_{t\in[0,T[}|x(t)|+2\sup_{s\ge 0}|\xi(s)|+1=:R.
 $$
If instead $t>   \tau_{h_j}^j$,  by standard estimates, we have
 $$
 \begin{array}{l}
  |x_j(t)|\le \left\{|x_j(\tau_{h_j}^j)| +\right. \\ \, \\
   \left.(m+1) M[T -\tau_{h_j}^j+C\,|\varphi(s_j)-u(T)|]\right\}\, e^{(m+1) M[T-\tau_{h_j}^j+C|\varphi(s_j)-u(T)|]}\le  \\ \, \\
 \qquad\qquad\qquad\left\{R+(m+1) M[T +C\,\text{diam}(U)]\right\}\, e^{(m+1) M[T +C\,\text{diam}(U)]}=:R'.
   \end{array}
  $$
Hence, for every $j$,   
\bel{xj}
 \sup_{t\in[0,T]}|x_j(t)|\le R'.
 \eeq 
As a consequence,  by the Dominated Convergence  Theorem we also have that  
$\lim_j \|(x_j,u_j)- (x,u)\|_{L^1(T)}\to 0$. 
 	  
\noindent Finally,  for every $j$, recalling that $x_j(\tau_{h_j}^j)=\xi_{h_j}(s_j)$, we have
$$
| x_j(T)- x(T)|\le |x_j(T)-  x_j(\tau_{h_j}^j)|+ |\xi_{h_j}(s_j)- \xi(s_j)|+ |\xi(s_j)-x(T)|  
$$
where $\lim_j\xi(s_j)=x(T)$ and $|\xi_{h_j}(s_j)- \xi(s_j)|\le1/j\to0$ as $j\to+\infty$.     Using (\ref{xj}) together with standard estimates,  we get
\bel{stxT}
\begin{array}{l}
|x_j(T)-x_j(\tau_{h_j}^j)|=\left|\int_{\tau_{h_j}^j}^T\left[g_0(x_j,u_j,v)+\sum_{i=1}^mg_i(x_j,u_j)\dot u_j\right]\,dt\right| \le  \\ \, \\
\qquad(m+1)(1+ R')M[T-\tau_{h_j}^j+C|u_{h_j}(\tau_{h_j}^j)-u(T)|]\le
 \\ \, \\
\qquad\qquad\qquad\qquad  (m+1)(1+ R')M[T-t_j+C|\varphi(s_j)-u(T)|],
\end{array}
\eeq
recalling that $h_j\ge j$ so that $t_{h_j}\ge t_j$ and  hence $\lim_j|x_j(T)-x_j(\tau_{h_j}^j)|=0$. Thus  $\lim_j x_j(T)=x(T)$ and if we rename the index $j$ in the sequence $(x_j,u_j)_j$ by $k$, we obtain a sequence  $(x_k,u_k)$ verifying theses (i) and (ii). 

For every $k$, let    $(\hat\xi_k, \hat\varphi_{0_k},\hat\varphi_k, v\circ \hat\varphi_{0_k}, S_k)$ be the arc-length graph parametrization   of $(x_k,u_k,v)$ (see Definition \ref{Dal}).  In view of Remark \ref{condST}, in order to prove  (iii) we need to estimate $ |(\hat\xi_k,\hat\varphi_k)(s_j)-(\hat\xi_k,\hat\varphi_k)(S_k)|$. By the definition of $(x_k,u_k)$,  it follows that
$$
(\hat\xi_k,\hat\varphi_k)(S_k)=(x_k,u_k)(T)=(x_k,u)(T).  
$$
Moreover,  for every $k\ge j$,  we have $(\hat\varphi_{0_k}, \hat\varphi_k,\hat\xi_k) =(\varphi_{0_{h_k}},\varphi,\xi_{h_k})$ on $[0,s_j]$   (where $\varphi_{0_{h_k}}$, $\xi_{h_k}$ are the maps introduced above, with $j$ replaced by $k$) and, by \eqref{xij}, \eqref{xijfrac}, 
$$
\sup_{s\in[0,s_j]}|\hat\xi_k(s)-\xi(s)|\le \frac{1}{j}.
 $$
  Hence   for every $k\ge j$,   we  get   
 $$
 \lim_j|\hat\varphi_k(s_j) -\hat\varphi_k(S_k)|=\lim_j|\varphi(s_j)-u(T)|= 0
 $$
  independently of $k$,   and 
$$
\begin{array}{l}
 |\hat\xi_k(s_j)-\hat\xi_k(S_k)|\le |\hat\xi_k(s_j)-\xi(s_j)|+ 
 |\xi(s_j)-x(T)|+|x(T) -x_k(T)|= \tilde\varepsilon(j),  \end{array}
$$ 
where  $\lim_j\tilde\varepsilon(j)\to0$ and $\tilde\varepsilon$ does not depend on $k$, since  $|\hat\xi_k(s_j)-\xi(s_j)|\le1/j$, $|\xi(s_j)-x(T)|\to 0$   by hypothesis  and $\lim_j|x(T) -x_k(T)| =0$, being $k\ge j$. The proof of the theorem in Case 1 is thus concluded.
 \vsm
\noindent{\sc \underline{Case 2}:} let $\bar S=+\infty$.  Let  $(\sigma_h)_h$ be the sequence of absolutely continuous, strictly increasing functions   from $[0,T[$ onto $ \R_+$, pointwisely converging to $\sigma$,  whose existence is guaranteed by Theorem \ref{Cl1-2}. Let $\varphi_{0_h}:=\sigma_h^{-1}$ be the sequence of the $1$-Lipschitz continuous  inverse maps, uniformly converging  to $\varphi_0$ on any compact interval. 
  Let $(s_j)_j$ be as in \eqref{fsj}. For every $j$ and $h$,  we set 
$$
 \tau^j:= \varphi_0(s_j), \quad  \tau^j_h:= \varphi_{0_h}(s_j).
$$
Since $\varphi_0(s)<T$ for all $s\ge0$ and $\lim_{s\to+\infty}\varphi_0(s)=T$,  one has $\tau_j<T$ for every $j$  and  $\lim_j\tau^j=T$.  Passing eventually to a subsequence, it is not restrictive to assume that  the sequence $(\tau^j)_j$ is strictly increasing. Clearly, for every $h$ the sequence $(\tau^j_h)_j$ is strictly increasing, $0< \tau^j_h<T$ and $\lim_j\tau^j_h=T$.
 In view of  Theorem \ref{Cl1-2},  for every $j$ and $h$, if we set $\varepsilon_j(h):=\sup_{s\in[0,s_j]}|\varphi_{0_h}(s)-\varphi_0(s)|$,   
we have that
\bel{t1}
\sup_{s\in\R_+}|\varphi_{0_h}(s)-\varphi_0(s)|\le \varepsilon_j(h)+(T-\min\{\tau^j, \tau^j_h\}), 
\eeq
where  $\tau^j-\varepsilon_j(h)\le\tau^j_h<T$,  $\lim_h\varepsilon_j(h)=0$.  

\vv
Let us  set $(x,u,v):=  (\xi,\varphi,\psi)\circ\sigma$,  $u_h:=\varphi\circ\sigma_h$,   $x_h:=x[\xb,\ub,u_h,v]$ and $\xi_h:= \xi[\xb,\ub,\varphi_{0_h}, \varphi,v\circ\varphi_{0_h}]$, so that  $x_h=\xi_h\circ\sigma_h$.  Then, by Proposition \ref{xiv} we have
$$
\sup_{s\in[0,s_j]}|\xi_h(s)-\xi(s)|=:\varepsilon_j^1(h),
$$
where   $\lim_h \varepsilon_j^1(h)=0$. Now, similarly to Case 1, let us introduce a sequence $(h_j)_j$  such that 
$$
\varepsilon_j(h), \ \varepsilon^1_j(h)\le \frac{1}{j} \quad\text{ for every $h\ge h_j$.}
$$
At this point, the sequence of absolutely continuous functions $(x_j,u_j)_j$ defined  as in (\ref{sAC}) is equibounded and  converges pointwisely and in $L^1$-norm to $(x,u)$. Indeed, it is enough to observe that $\tau^j-(1/j)\le\tau^j_{h_j}<T$, so that $\lim_j  \tau^j_{h_j}=T$ and afterwards the proof is the same as in Case 1.  \qed
 
 \vv
\subsection{Proof of Theorem \ref{TST2}: Characterization}
 
 Let us now prove that  a BV$_{loc}$S  limit solution $x$ is a BV$_{loc}$  graph completion solution.  
Let us assume the  {\sc \underline{Claim}}   at the beginning of
Subsection \ref{dim1}
as hypothesis.

 For every $k$, set $V_k: =Var_{[0,T]}(u_k)$ \,  ($<+\infty$). Taking eventually a subsequence,  we can assume that the sequence  $(V_k)_k$ of the  variations  is increasing.  If  this sequence is bounded,  $x$ is in fact a BVS  limit solution and it coincides with a BV graph completion solution by \cite[Theorem 4.2]{AR}.  Hence let us assume  
\bel{Vinfty}
\lim_k V_k=+\infty.
\eeq 
In order to prove that $x$ is a BV$_{loc}$  graph completion solution on $[0,T]$, let us consider the arc-length graph parametrizations of the inputs $u_k$. Precisely, let us  define for every $k$,   a map $\sigma_k:[0,T]\to[0,T+V_k]$ by setting
\bel{sk}
\sigma_k(t):= t+Var_{[0,t]}(u_k) \quad (\le t+V(t))
\eeq 
and let $\varphi_{0_k}:\R_+\to[0,T]$ be   the $1$-Lipschitz continuous,  increasing function such that  
$$
\varphi_{0_k}:=\sigma_k^{-1} \ \text{ on $[0,T+V_k]$, and }  \ \varphi_{0_k}(s)=T \ \text{ for all $s\ge T+V_k$.}$$
Set $\varphi_k:= u_k\circ\varphi_{0_k}$. Then the sequence of space-time controls  $(\varphi_{0_k},\varphi_k)_k$   is $1$-Lipschitz continuous on $\R_+$ and satisfies $\varphi'_{0_k}(s)+|\varphi_k'(s)|=1$ for a.e. $s\in [0,T+V_k]$ (and  $\varphi'_{0_k}(s)+|\varphi_k'(s)|=0$ for $s>T+V_k$).  
 Therefore by Ascoli-Arzel\`a's Theorem, taking if necessary a subsequence which we still denote by  $(\varphi_{0_k},\varphi_k)_k$, it converges  uniformly on any compact interval  $[0,S]$ and pointwise on $\R_+$ to a  Lipschitz continuous function $(\varphi_0,\varphi)$ such that $\varphi'_{0}(s)+|\varphi'(s)|\le1$ for $s\ge0$. 

Let us show that $(\varphi_0,\varphi)$ is a  BV$_{loc}$  graph completion of $u$, possibly not feasible (namely, not verifying the equality $\varphi'_{0}(s)+|\varphi'(s)|=1$ a.e.). Clearly,  $\varphi_0$  is nondecreasing,   $\varphi_0(0)=0$ and $\lim_{s\to+\infty}\varphi_0(s)\le T$. In fact, let us prove that 
$$
\lim_{s\to+\infty}\varphi_0(s)=T. 
$$
For any $\varepsilon>0$ we show that there exists some $S_\varepsilon>0$ such that  $\varphi_0(s)>T-\varepsilon$ \, $\forall s\ge S_\varepsilon$. Let $T-\varepsilon<t_\varepsilon<T$ and define,  for every $k$,  $S_{\varepsilon,k}:=\sigma_k(t_\varepsilon)$. Notice that  
$$
S_{\varepsilon,k}= t_\varepsilon+Var_{[0,t_\varepsilon]}(u_k) \le  t_\varepsilon+V( t_\varepsilon)=: S_\varepsilon,
$$
so that $t_\varepsilon=\varphi_{0_k}(S_{\varepsilon,k})\le \varphi_{0_k}(S_\varepsilon)<T$. Therefore, for any $s\ge S_\varepsilon$, we obtain that
$$
\varphi_0(s)\ge \varphi_0(S_\varepsilon)=\lim_k\varphi_{0_k}(S_\varepsilon)\ge t_\varepsilon >T-\varepsilon
$$ 
and the limit above is proved.  For every $t\in[0,T[$,   by  \eqref{sk}
  there exist a subsequence $(\sigma_{k'}(t))_{k'}$ and $\sigma(t)\in [0,t+V(t)]$  such that $\lim_{k'}\sigma_{k'}(t)=\sigma(t)$. Therefore, by the uniform convergence of  $(\varphi_{0_k},\varphi_k)_k$ on  
$[0,t+V(t)]$, recalling   (\ref{Thst}), it follows that 
$$
(\varphi_0,\varphi)\circ\sigma(t)=\lim_{k'}(\varphi_{0_{k'}},\varphi_{k'})\circ\sigma_{k'}(t)=(t,u(t)).
$$
 Hence $(\varphi_0,\varphi)$ is a (possibly not feasible)  BV$_{loc}$  graph completion of $u$ on $[0,T[$. 
\vv\noindent
Let  $\xi_k:=\xi[\xb,\ub,\varphi_{0_k},\varphi_k,v\circ\varphi_{0_k}]$ and $\xi:=\xi[\xb,\ub,\varphi_0,\varphi ,v\circ\varphi_0]$  be the corresponding  solutions of (\ref{SEaff}). Clearly, $\xi_k=x_k\circ \varphi_{0_k}$.  We set
$$
 \tilde x (t):= \xi\circ\sigma(t) \quad \forall t\in[0,T[,
$$
so that $\tilde x$ is a BV$_{loc}$  graph completion solution (on $[0,T[$). Actually, $\tilde x(t)=x(t)$ for any $t\in[0,T[$, since
$$
x(t)=\lim_{k'}x_{k'}(t)=\lim_{k'} \xi_{k'}\circ \sigma_{k'}(t)=\xi\circ\sigma(t)=\tilde x(t),
$$
where we used the uniform convergence of $\xi_k$ to $\xi$ on $ [0,t+V(t)]$, guaranteed by Proposition \ref{xiv},  together with the pointwise convergence of $ \sigma_{k'}(t)$ to $\sigma(t)$.   
 
\vsm
 In order to conclude the proof that $x$ is a  BV$_{loc}$ graph completion solution, let us show that   $\lim_j(\xi, \varphi)(\tilde s_j)= (x,u)(T)$, where $(\tilde s_j)_j$ is the same as in (iii) of the Claim.  In view of Remark \ref{condST},  hypothesis (iii) implies that    
 $$
 |(\xi_k,\varphi_k)(\tilde s_j)- (\xi_k,\varphi_k)(S_k)|\le\tilde\varepsilon(j)
 $$
  with $S_k:=\sigma_k(T)=T+V_k$, for every $k>k_j$ ($\ge j$),  for some positive, decreasing sequence  $\tilde\varepsilon$ with $\lim_j\tilde\varepsilon(j)=0$. Notice that,  for every $j$, 
 $$
 \sup_{[0,\tilde s_j]}|(\xi,\varphi)(s)-(\xi_k,\varphi_k)(s)|\le\varepsilon_j(k)
 $$
  for  some  positive, decreasing sequence  $\varepsilon_j$ with $\lim_k\varepsilon_j(k)=0$, because of the uniform convergence of $(\xi_k,\varphi_k)$ to $(\xi,\varphi)$ on compact intervals. Hence we can define a sequence $(\hat k_j)_j\subset \N$ with $\hat k_j\ge k_j$ and such that $\varepsilon_j(k)\le1/j$ for all $k>\hat k_j$.
Taking into account that 
$(\xi_k,\varphi_k)(S_k)=(x_k,u_k)(T),$ for every $k>\hat k_j$, we get
\bel{3}\begin{array}{l}
|(\xi,\varphi)(\tilde s_j)- (x(T),u(T))|\le
|(\xi,\varphi)(\tilde s_j)-(\xi_k,\varphi_k)(\tilde s_j)|+\\ \, \\
|(\xi_k,\varphi_k)(\tilde s_j)-(x_k(T),u_k(T))|+|(x_k(T),u_k(T))-(x(T),u(T))| \le \\ \, \\
1/j+\tilde\varepsilon(j)+|(x_k(T),u_k(T))-(x(T),u(T))|,
\end{array}\eeq
where  
$\lim_j|(x_k(T),u_k(T))-(x(T),u(T))|=0$, being $k>k_j\ge j$.
Therefore $\lim_j (\xi,\varphi)(\tilde s_j)=(x(T),u(T))$.
\vsm
  At this point we can recover a feasible space-time control in $\U(T;\ub)$ by introducing the change of variable
 $$
 \eta(s):=\int_0^s\left[\varphi_0'(r)+|\varphi'(r)|\right]\,dr \quad \forall s\ge0, \quad \tilde V:=\lim_{s\to+\infty}\eta(s)-T\le +\infty,
 $$
 considering,  e.g. $s(\cdot): [0,T+\tilde V[\to\R_+$, the strictly increasing  right-inverse  of $\eta$  and defining
 $$
 (\tilde\varphi_0,\tilde\varphi, \tilde\psi,\tilde S):=(\varphi_0\circ s,\varphi\circ s,\psi\circ s, T+\tilde V).
 $$
Let us set $\tilde\xi:=\xi[\xb,\ub,\tilde\varphi_0,\tilde\varphi, \tilde\psi]$. Notice that $(\varphi_0,\varphi)$ is constant on any interval $[s_1,s_2]$ where  $\eta$ is constant, so that $(\tilde\varphi_0,\tilde\varphi,\tilde\xi)\circ\eta=(\varphi_0,\varphi,\xi)$. Hence  $(\tilde\varphi_0,\tilde\varphi)$ turns out to be a  feasible BV$_{loc}$  graph completion  of $u$ on $[0,T]$ with clock $\tilde\sigma:=\eta\circ\sigma$. Finally, $x$ is a BV$_{loc}$  graph completion solution such that $x=\tilde\xi\circ\tilde\sigma$. \qed

\section{Technical proofs}\label{proofs}
\subsection{Proof of Lemma \ref{Legc}}

(i)\, Since $u$  is a BV function,  the set   ${\mathcal T}\subset[a,b]$  of discontinuity points of  $u$ is countable and right and left limits of $u$ always exist.  For every $\tau_j\in{\mathcal T},$ owing to the Whitney property, we can define the maps
$\tilde u_j^-$, $\tilde u_j^+$, $\tilde u_b:[0,1]\to U$ verifying 
$$
\tilde u_j^-(0)=u(\tau_j^-),  \ \tilde u_j^-(1)=u(\tau_j);\quad  \tilde u_j^+(0)=u(\tau_j),  \ \tilde u_j^+(1)=u(\tau_j^+);
$$
$$
\tilde u_b(0)=u(b),  \ \tilde u_b(1)=\bar u_1
$$
and such that 
$$
Var_{[0,1]}(\tilde u_j^-)\le C|  u(\tau_j)-u(\tau_j^-)|; \quad Var_{[0,1]}(\tilde u_j^+)\le C|  u(\tau_j^+)-u(\tau_j)|;
$$ 
$$
Var_{[0,1]}(\tilde u_b)\le C|  u(b)-\bar u_1|.
$$
We introduce the function  $\sigma:[a,b]\to [0,\lambda]$ given by 
$$
\sigma(t)=t-a+Var_{[a,t]}(u) \quad {\rm and}\quad \lambda:=b-a+V.
$$
Notice that  $u$ is continuous, [left-continuous, right-continuous] at   $t$ if and only if $\sigma$ is continuous, [left-continuous, right-continuous] at $t$ and let  $\hat\varphi_0$ be the unique, increasing  and continuous function such that $\hat \varphi_0\circ \sigma(t)=t$
for all $t\in[a,b]$.  Similarly to the proof of  \cite[Theorem 2.4]{ AR},  
let us set 
\bel{hat}
\hat\varphi(\sigma)=\left\{\begin{array}{l}
\tilde u_j^{-}\left(\frac{\sigma-\sigma(\tau_j^-)}{\sigma(\tau_j)-\sigma(\tau_j^-)}\right)\quad {\rm if} \,\,\sigma(\tau_j^-)<\sigma(\tau_j) \,{\rm and}\,\, \sigma\in[\sigma(\tau_j^-),\sigma(\tau_j)]
\\\,\\
\tilde u_j^+\left(\frac{\sigma-\sigma(\tau_j)}{\sigma(\tau_j^+)-\sigma(\tau_j)}\right)\quad {\rm if} \,\,\sigma(\tau_j)<\sigma(\tau_j^+) \,{\rm and}\,\, \sigma\in[\sigma(\tau_j),\sigma(\tau_j^+)]
\\ \,\\
u(\tau_j)\quad {\rm if, \, for\, some }\, j,\,{\rm either }\,\,\sigma=\sigma(\tau_j^-)=\sigma(\tau_j)\,\,{\rm or}\,\,\sigma=\sigma(\tau_j)=\sigma(\tau_j^+)
\\\,\\
\tilde u_b(\sigma-\lambda)\chi_{_{[\lambda,\lambda+1]}}(\sigma)+\tilde u_b(2-\sigma+\lambda)\chi_{_{(\lambda+1,\lambda+2]}}(\sigma)\quad {\rm if}\,\, \sigma\in[\lambda,\lambda+2]\\ \,\\
u\circ\hat\varphi_0(\sigma)\quad {\rm if}\,
\sigma\in[0,\lambda]\setminus\sigma({\mathcal{T}}).
\end{array}\right.
\eeq
Setting $\hat\varphi_0(\sigma)=b$ for $\sigma\in[\lambda,\lambda+2]$ we have that the function $(\hat\varphi_0,\hat\varphi):[0,\lambda+2]\to[a,b]\times U$ is absolutely continuous, verifies $(\hat\varphi_0,\hat\varphi)(0)=(a,u(a))$, $(\hat\varphi_0,\hat\varphi)(\lambda)=(\hat\varphi_0,\hat\varphi)(\lambda+2)=(b,u(b))$ and $(\hat\varphi_0,\hat\varphi)(\lambda+1)=(b,\bar u_1)$. Moreover,   
$$
\lambda\le Var_{[0,\lambda]}(\hat\varphi_0,\hat\varphi), 
$$
and
$$
Var_{[0,\lambda+2]}(\hat\varphi_0,\hat\varphi)\le(b-a)+2C(V+|u(b)-\bar u_1|)).
$$
 Let us now introduce, for $\sigma\in [0,\lambda+2],$ the arc-length parametrization
\bel{arc}
s(\sigma)=\int_0^\sigma(\hat\varphi'_0(r)+|\hat\varphi'(r)|)\,dr
\eeq
and let us set 
\bel{ss}
S:=s(\lambda+1)\quad{\rm and}\,\,\,\,\tilde S:=s(\lambda+2),
\eeq
so that
 $$
(b-a)+V+|u(b)-\bar u_1|\le  S\le\tilde S \le(b-a)+2C(V+|u(b)-\bar u_1|).
$$
Let $\tilde\sigma:[0,\tilde S]\to[0,\lambda+2]$ denote the inverse function of $s(\cdot)$ and define
\bel{fin}
(\varphi_0,\varphi)(s):=(\hat\varphi_0,\hat\varphi)\circ \tilde\sigma(s)\quad
{\rm for}\,\,s\in[0,\tilde S].
\eeq
We get   $\varphi'_0+|\varphi'|=1$ a.e.,   $(\varphi_0,\varphi)(0)=(a,u(a)),$  
$$
(\varphi_0,\varphi)(s(\lambda))=(\varphi_0,\varphi)(\tilde S)= (b,u(b)), \ 
(\varphi_0,\varphi)(S)=(b,\bar u_1), 
$$
and it is easy to see that for any $t\in [a,b]$ there is $s\in[0,\tilde S]$   (in fact, $s\in[0,s(\lambda)]$) such that
\medskip
 $(t,u(t))=(\varphi_0,\varphi)(s)$. 
\newline
\noindent (ii)\, For $s>\tilde S$, let us consider the periodic extension of the restriction $(\varphi_0,\varphi)$ to the interval $[s(\lambda), s(\lambda+2)]$, with period $p=s(\lambda+2)-s(\lambda)$. 
Setting, for every $j\ge1$,  $s_j:=s(\lambda+1)+j p,$ one clearly has  $(\varphi_0,\varphi)(s_j)=(b, \bar u_1)$ for all $j$, so proving (ii). \qed

 \subsection{Proof of Proposition \ref{xiv}}
 
  Let $(\varphi_0,\varphi,\psi)$, $(\varphi_{0_h},\varphi_h,\psi)\in {\mathcal U}(T; \ub,S)$, $\xi$, $\xi_h$ be  the given space-time controls and the corresponding solutions, respectively.  Since $\varphi'_0(s)+|\varphi'(s)|=1$ and $\varphi'_{0_h}(s)+|\varphi_h'(s)|=1$ a.e. on $[0,S]$,  so that in particular they are bounded, by standard estimates it follows that  
$$
\sup_{s\in[0,\tilde S]}|\xi(s)|,\ \sup_{s\in[0,\tilde S]}|\xi_h(s)|\le \bar M:=(|\xb|+(m+1)M\tilde S)e^{(m+1)M\tilde S}.
$$
Let us denote by $\omega$ a modulus of continuity of $g_0$ and by $\tilde M$,  $\tilde L$   a sup-norm and a Lipschitz constant, respectively,   for the vector fields $g_i$, $i=0,\dots m$,   in the compact set $\overline{B_n(0,\bar M)}\times U\times V$.  

Let us start by showing that
$\xi=\xi[\xb,\ub,\varphi_0,\varphi,\psi]\equiv\xi[\xb,\ub,\varphi_0,\varphi,v\circ\varphi_0]=:\tilde\xi$.
Indeed,  there is an at most countable number of disjoint intervals, say $[s^1_j,s^2_j]$ for $j\in J$,  where $\varphi_0$ is constant; moreover,  $\psi$ may differ from $v\circ\varphi_0$ only on these intervals, for $\varphi_0 ^{\leftarrow}(\varphi_0(s))$ is single valued outside such set.  Hence,  for every $s\in]0,\tilde S]$, we get
 $$
 \begin{array}{l}
\xi(s)-\tilde \xi(s)=
 \int_{[0,s]\setminus \bigcup_j [s^1_j,s^2_j]}[g_0(\xi(r),\varphi(r),\psi(r))- g_0(\tilde\xi(r),\varphi(r),v\circ\varphi_0(r))]\,dr  \\
+\int_{[0,s]}\sum_{i=1}^m[g_i(\xi(r),\varphi(r))-g_i(\tilde\xi(r),\varphi(r))]\varphi'_i(r)]\,dr
\end{array}
  $$
and thesis \eqref{xiv1} follows easily by Gronwall's Lemma.

In order to prove \eqref{xivh},  for every $s\in[0,\tilde S]$ we apply again   Gronwall's Lemma and get   
\bel{4r}
\begin{array}{l}
 |\xi_h(s)-\xi(s)|\le \left(\left| \int_0^{s}[g_0(\xi(r),\varphi(r),v\circ\varphi_{0}(r))[\varphi'_{0_h}(r)-\varphi'_0(r)]+\sum_{i=1}^mg_i(\xi(r),\varphi(r))[\varphi'_{h}(r)-\varphi'(r)]]\,dr\right| + \right. \\ \, \\
  \left.  
 (m +1)\tilde L\int_0^{\tilde S}|\varphi_h(r)-\varphi(r)|\,dr+ \int_0^{\tilde S} \omega (|v\circ\varphi_{0_{h}}(r)-v\circ\varphi_{0}(r)|)\varphi'_{0_h}(r)\,dr\right)\,e^{(m+1)\tilde L{\tilde S}}.
 \end{array}
\eeq
The uniform convergence of $(\varphi_{0_h},\varphi_h)$ to $(\varphi_{0},\varphi)$ on $[0,\tilde S]$  implies that the maps $(\varphi'_{0_h},\varphi'_{h})$ tend to $(\varphi'_0, \varphi')$ in  the weak$^*$ topology   of $L^\infty([0,\tilde S], \R^{1+m})$, so that 
 $$
 f_h(s):=\left| \int_0^{s}[g_0(\xi(r),\varphi(r),v\circ\varphi_{0}(r))[\varphi'_{0_h}(r)-\varphi'_0(r)]+\sum_{i=1}^mg_i(\xi(r),\varphi(r))[\varphi'_{h}(r)-\varphi'(r)]]\,dr\right|
 $$ 
 tends to 0 as $h\to+\infty$.    The uniform convergence to 0 of the $f_h$'s now follows from Ascoli-Arzel\'a Theorem, for the $f_h$'s are equibounded and equi-Lipschitzean. 
The  convergence to 0 of the second integral  in the r.h.s. of (\ref{4r}) is trivial. It remains to prove the convergence to $0$, eventually for a further subsequence,   of the last term of (\ref{4r}). Let us set  $\sigma_h:=\varphi_{0_h}^{-1}$ and observe that  
  \bel{nonlip}
\int_0^{\tilde T}|v(t)-v\circ\varphi_{0}\circ\sigma_h(t)|\,dt=\int_0^{\tilde S}|v\circ\varphi_{0_{h}}(s)-v\circ\varphi_{0}(s)|\varphi'_{0_h}(s)\,ds.
\eeq
Now, it suffices to prove that   the expression in  (\ref{nonlip}) tends to 0 as $h\to+\infty$: in this case, indeed,   there exists a subsequence of $(v-v\circ\varphi_{0}\circ\sigma_h)$ converging to 0 a.e.  on  $[0,\tilde T]$, and the Dominated Convergence Theorem implies that, for such subsequence, 
\bel{nonlipg}
\int_0^{\tilde S}\omega (|v\circ\varphi_{0_{h}}(s)-v\circ\varphi_{0}(s)|)\varphi'_{0_h}(s)\,ds=\int_0^{\tilde T}\omega(|v(t)-v\circ\varphi_{0}\circ\sigma_h(t)|)\,dt\to0, 
\eeq
so implying  \eqref{xivh}. 

\noindent Since $|\varphi'_{0_h}|\le 1$, when $v$ is a continuous function (\ref{nonlip})  holds true,  
owing to the uniform continuity of $v$ and to the uniform convergence of $\varphi_{0_h}$ to $\varphi_{0}$ on $[0,\tilde T]$.  For  $v\in L^1([0,\tilde T], V)$, $\forall\varepsilon>0$   there exists, by density, $\tilde v\in C_c([0, \tilde T], \R^l)$  such that $\int_0^{\tilde T}|\tilde v(t)-v(t)|\,dt\le\varepsilon.$ Hence we get
$$
\begin{array}{l}\int_0^{\tilde S}| v\circ\varphi_{0_h}(s)-v\circ\varphi_{0}(s)|\varphi'_{0_h}(s)ds\le 
\int_0^{\tilde S}| v\circ\varphi_{0_h}(s)-\tilde v\circ\varphi_{0_h}(s)|\varphi'_{0_h}(s)\,ds+\\ \, \\
\quad\int_0^{\tilde S}|\tilde v\circ\varphi_{0_h}(s)-\tilde v\circ\varphi_{0}(s)|\varphi'_{0_h}(s)\,ds+
\int_0^{\tilde S}|\tilde v\circ\varphi_{0 }(s)- v\circ\varphi_{0}(s)|\varphi'_{0_h}(s)\,ds.
\end{array}
$$
Performing the change of variable $t=\varphi_{0_h}(s)$,   the first integral on the r.h.s. is smaller than $\varepsilon$, while 
the second one converges to 0 because $\tilde v$ is continuous. For the third integral on the r.h.s., taking into account that  $|v(t)|$, $|\tilde v(t)|\le \hat M$ for all $t\in[0,\tilde T]$ for some  $\hat M>0$,  by the weak$^*$  convergence of  $\varphi'_{0_h}$ to   $\varphi'_0$  we derive that
  $$
 \begin{array}{l}
 \int_0^{\tilde S}|\tilde v\circ\varphi_{0 }(s)- v\circ\varphi_{0}(s)|\varphi'_{0_h}(s)\,ds\to \int_0^{\tilde S}|\tilde v\circ\varphi_{0}(s)-v\circ\varphi_{0}(s)|\varphi'_{0}(s)\,ds \quad \text{as $h\to+\infty$,}\end{array}
$$
and the last term is smaller  is smaller than $\varepsilon$   by the change of variable  $t=\varphi_{0}(s)$.  This concludes the proof of (\ref{nonlip}) by the arbitrariness of $\varepsilon>0$. \qed
\vv 
\subsection{Proof of Theorem \ref{Cl1-2}}
 
\noindent{\sc \underline{Case 1}:} $\lim_{t\to T^-}\sigma(t)=\bar S<+\infty$.  
Let us extend  $\sigma$ to   $[-T,2T]$ as follows:
\bel{ext2}\tilde\sigma(t) =\sigma(t) \chi_{_{|[0,T]}}(t) -\sigma(-t)\chi_{_{|[-T,0[}}(t)-(\sigma(2T-t)-2\sigma(T))
\chi_{_{|]T,2T]}}(t).\eeq
Let   $\rho:\R\to\R_+,$  $\rho\in{\mathcal C}^\infty$ be an even map, with compact support contained on $[-T,T]$ and  such that $\int_\R\rho(t)\,dt=1$; for $h\in\N$ let us set  $\rho_h(t):=2h\rho(2ht)$  and
\bel{conv1}
\tilde\sigma_h(t):= \int_{-\infty}^{+\infty}\tilde\sigma(t-\tau)\rho_h(\tau)\,d\tau.
\eeq
 The fact that  $\rho$ is even together with (\ref{ext2}) easily yield,
for every $h\in\N$,  
 \bel{mean0}
 \tilde\sigma_h(0)=0,\quad\   \tilde\sigma_h(T)=\sigma(T^-)=\bar S.
 \eeq
 By construction, the $\tilde\sigma_h$ are continuous, strictly increasing,  and, by a property of the convolution product, 
 \bel{mean}
 \lim_h\tilde\sigma_h(t)=\frac{\sigma(t^+)+\sigma(t^-)}{2}\quad {\rm for }\ 0\le t< T. 
 \eeq 
It is easy to  show  that for any $t_1,\,t_2\in [0,T[$ with $t_1<t_2$, (\ref{ext2}) implies 
\bel{1-2}
\begin{array}{l}\tilde\sigma_h(t_2)-\tilde\sigma_h(t_1)=\int_{-\infty}^{+\infty}(\tilde\sigma(t_2-\tau)-\tilde\sigma(t_1-\tau))\rho_h(\tau)\,d\tau\ge\\
\qquad\qquad\qquad\qquad\qquad\qquad\qquad\int_{-\infty}^{+\infty}(t_2-t_1)\rho_h(\tau)\,d\tau\ge t_2-t_1.
\end{array}\eeq

 Let    $(\bar t_h)_h$ be a strictly increasing sequence of continuity points of $\sigma$ converging to $T.$ By
the strict monotonicity of $\sigma$ and \eqref{mean} it follows that $s_h:=\tilde\sigma_h(\bar t_h) < \bar S$ and $\lim_h s_h=\bar S$.   
In order to obtain a sequence of strictly increasing maps which are onto on $\R_+$ and converging to $\sigma,$  let us set
  $$
 \sigma_h(t):=\left\{\begin{array}{l}\tilde\sigma_h(t)\quad {\rm for}\, \,t\le \bar t_h\\
s_h\sqrt{\frac{T-\bar t_h}{T-t}}\quad\,{\rm for}\,\,\bar t_h\le t<T.
\end{array}\right.
$$
Since $\sigma'_h(t)=s_h\frac{\sqrt{T-\bar t_h}}{2(T-t)^{3/2}}$ for $t\in]\bar t_h,T[$,   $\sigma'_h \ge  \frac{s_h}{2(T-\bar t_h)} \ge 1$ for $h$ large enough and for any $t_1,\,t_2\in [0,T[$ with $t_1<t_2,$ we get
$\sigma_h(t_2)-\sigma_h(t_1)\ge t_2-t_1$. Moreover,   the maps $ \sigma_h$ are continuous,  onto on $\R_+$,  and  verify (\ref{mean}) for every $t<T$, since $  \sigma_h(t)=\tilde\sigma_h(t)$ for all $h$ such that $\bar t_h>t$.
 The inverse functions  
$$
\varphi_{0_h}(s):=\sigma_h^{-1}(s)=\left\{\begin{array}{l}\tilde\sigma_h^{-1}(s),\quad 0\le s\le s_h\\
T-\frac{s_h^2}{ s^2}(T-\bar t_h),\quad s> s_h,
\end{array}\right.
$$
are  $1$-Lipschitz continuous and strictly increasing, so that by Ascoli-Arzel\`a's Theorem, taking if necessary a subsequence, they converge     uniformly on any compact interval  $[0,S]$ and pointwise on $\R_+$ to a  $1-$Lipschitz continuous function $\hat\varphi_0$. In fact, $\hat\varphi_0=\varphi_0$, where $\varphi_0=\sigma^{-1}$ on $[0,\bar S[$ and $\varphi_0(s)=T$ for all $s\ge\bar S$. Indeed, if $t<T$ is a continuity point of $\sigma$,   $\sigma_h(t)= \tilde\sigma_h(t)<\bar S$ for $h$ sufficiently large, and 
$$
t=\varphi_{0_h}(\tilde\sigma_h(t))\le |\varphi_{0_h}(\tilde\sigma_h(t))-\varphi_{0_h}(\sigma(t))|+\varphi_{0_h}(\sigma(t))\le
|\tilde\sigma_h(t)-\sigma(t)|+\varphi_{0_h}(\sigma(t))
$$
which implies that   
$$
 \varphi_0(\sigma(t))=t=\lim_h \varphi_{0_h}(\tilde\sigma_h(t)) =\hat\varphi_0(\sigma(t)).
$$
 If $t$ is not a continuity point, then
there exist two sequences $t_k^1$ and $t_k^2$ of continuity points of  $\sigma$ with 
$$t_k^1<t<t_k^2,\quad t_k^1\to t,\quad  t_k^2\to t.$$ 
Since the $\varphi_{0_h}$ are increasing,  then $\hat\varphi_0$ is increasing and
\bel{in}
\hat\varphi_0(\sigma(t_k^1))\le\hat\varphi_0(\sigma(t))\le\hat\varphi_0(\sigma(t_k^2)).\eeq 
Since $t_k^1$, $t_k^2$ are continuity points we have $t_k^i=\hat\varphi_0(\sigma(t_k^i))=\varphi_0(\sigma(t_k^i))$ for $i=1,2$ and  (\ref{in}) implies 
 $$ 
 t_k^1\le\hat\varphi_0(\sigma(t))\le t_k^2.
 $$
Passing to the limit,   we can conclude that $\hat\varphi_0=\varphi_0$ on $[0,\bar S[$. 

\noindent  Moreover,  $\varphi_{0_h}(s)\ge \varphi_{0_h}(\bar S)\ge \varphi_{0_h}(s_h)$ for every $s\ge \bar S$ and, setting $t_h:=\varphi_{0_h}(\bar S)$, we get  $\varphi_{0_h}(s)\ge t_h\ge \bar t_h$ for every $s\ge \bar S$ and  
$$
\sup_{s\ge \bar S} |\varphi_{0_h}(s)-\varphi_0(s)|=\sup_{s\ge \bar S}[T-\varphi_{0_h}(s)] \le(T-t_h)\le (T- \bar t_h)\to 0 \ \ \text{as $h\to+\infty$.}
$$
Hence $\varphi_{0_h}$ converges uniformly to $\varphi_0$  on $\R_+$ and  we have
\bel{sup1}
\sup_{s\in\R_+} |\varphi_{0_h}(s)-\varphi_0(s)|\le\varepsilon(h)+(T-  t_h)
\eeq
where $\varepsilon(h):=\sup_{s\in[0,\bar S]}|\varphi_{0_h}(s)-\varphi_0(s)|$.
\vsm
By (\ref{mean}) the proof is concluded if $\sigma(t)=\frac{\sigma(t^+)+\sigma(t^-)}{2}$ for every $t\in[0,T[$.  In the general case,  we can adapt the above construction simply  by replacing the sequence  $(\tilde\sigma_h)_h$   on $[0,T[$ by a new sequence of strictly increasing functions, pointwisely converging to the extended map $\sigma:[0,T]\to[0,\bar S]$, $\sigma(T)=\bar S$,  and verifying  (\ref{mean0}) and (\ref{1-2}), whose existence easily follows  by \cite[Theorem 5.1]{AR}.

\vsm 
\noindent{\sc \underline{Case 2}:}  $\lim_{t\to T^-}\sigma(t)=+\infty$.
 \,\,The function $\sigma$ does not in general belong to $L^1(T)$, hence the  convolution product \eqref{conv1} cannot be defined as in the previous   case.  Let us choose a strictly increasing sequence $(\bar t_i)_i$ (with $\bar t_0:=0$) of continuity points of $\sigma,$  such that $\lim_i\bar t_i=T.$
We know  that $\sigma$ is monotone and $\sigma\in L^1_{loc}(T)$ and we can perform the convolution of the restriction
$\sigma^i:=\sigma_{|_{I_i}}$, where  $I_i:=[\bar t_{i-1},\bar t_i]$  and $|I_i|:=\bar t_i- \bar t_{i-1}$ for $i\ge1$.
\newline
Let     $\rho^i :\R\to\R_+$ be an even,  ${\mathcal C}^\infty$  function, with compact support contained on $[-|I_i|, |I_i|], $  such that $\int_\R\rho^i (t)\,dt=1$ and let us set  $\rho^i_h(t):=2h\rho^i(2ht)$. Let us     extend  
each function $\sigma^i$ to $[\bar t_{i-1}-|I_i| ,\bar t_i+|I_i| ]$ as follows:   for $0<t\le |I_i|$   and for every $i\ge 1$ 
we set
\bel{ext1}\begin{array}{l}
\tilde\sigma^i(\bar t_{i-1}-t):=-\sigma^i(\bar t_{i-1}+t)+2\sigma(\bar t_{i-1}) \\
\tilde\sigma^i(\bar t_{i}+t):=-\sigma^i(\bar t_{i}-t)+2\sigma(\bar t_{i}).\end{array}\eeq
Let us now define for each $i$ and $h\ge 1$
$$\tilde\sigma_h^i(t):=\int_{-\infty}^{+\infty}\tilde\sigma^i(t-\tau)\rho^i_h(\tau)\,d\tau.$$
The fact that  $\rho^i$ is even and (\ref{ext1}) easily yield,
for every $h,\,i\in\N$,  
\bel{mean01}
\tilde\sigma^1_h(0)=0,\quad \tilde\sigma^{i}_h(\bar t_{i-1})=\sigma(\bar t_{i-1}),\quad  \tilde\sigma^i_h(\bar t_i)=\sigma(\bar t_i).
\eeq
We set for $t\in[0,T[$
\bel{convi}
\tilde\sigma_h(t):=\tilde\sigma_h^i(t),\quad {\rm if}\,t\in[\bar t_{i-1},\bar t_i[
\eeq
 so that $\tilde\sigma_h(\bar t_i)=\sigma(\bar t_i)$ for every $h$ and $i.$
By construction, $\tilde\sigma_h$ is continuous on $[0,T[$, strictly increasing since $\sigma$ is so, and for $t\in[0,T[$
$$\lim_{h\to+\infty}\tilde\sigma_h(t)=\frac{\sigma(t^+)+\sigma(t^-)}{2}$$
 Moreover if $0\le t_1<t_2<T$ then it is not difficult to prove, that
\bel{mon}\tilde\sigma_h(t_2)-\tilde\sigma_h(t_1)\ge t_2-t_1.\eeq
Indeed if $t_1,t_2\in I_i$ for some $i$,      we can prove that
\bel{1-2bis}
\begin{array}{l}\tilde\sigma_h(t_2)-\tilde\sigma_h(t_1)=
\tilde\sigma_h^i(t_2)-\tilde\sigma_h^i(t_1)=\int_{-\infty}^{+\infty}(\tilde\sigma^i(t_2-\tau)-\tilde\sigma^i(t_1-\tau))\rho_h^i(\tau)\,d\tau\ge\\\\
\int_{-\infty}^{+\infty}(t_2-t_1)\rho_h^i(\tau)\,d\tau\ge t_2-t_1.\end{array}
\eeq
If $t_1\in I_j$ and $t_2\in I_i$ and $j\ne i$, the same result can be easily proved, by interpolating 
a suitable number of $\sigma(\bar t_k)=\tilde\sigma_h(\bar t_k)$, since each $\tilde\sigma_h$ is continuous 
and  obtained by piecing together the ${\tilde\sigma_h^i}$ restricted to $I_i$.
\newline Since $\tilde\sigma_h$ is increasing, defined on $[0,T[$ onto $\R_+$ and (\ref{1-2bis}) holds, the maps $\tilde\varphi_{0_h}:=\tilde\sigma_h^{-1}:\R_+\to[0,T[$ are strictly increasing, surjective and$1$-Lipschitz continuous,   so that $\lim_{s\to\infty}\tilde\varphi_{0_h}(s)=T.$   
Taking if necessary a subsequence,  $(\tilde\varphi_{0_h})_h$ converges locally  uniformly  to an increasing  $1-$Lipschitz continuous function $\tilde\varphi_0$, which can be proven to coincide  with $\varphi_0$,  arguing similarly to  the previous case.   Hence for each   $t\in[0,T[$, ($\sigma(t^+)<+\infty$ and)  we can write
\bel{sup2}
\begin{array}{l}
\sup_{s\in\R_+}|\tilde\varphi_{0_h}(s)-\varphi_0(s)|\le \sup_{s\in[0,\sigma(t^+)]}|\tilde\varphi_{0_h}(s)-\varphi_0(s)|\\\,\\
\qquad \quad  +\sup_
{s\ge \sigma(t^+)}|\tilde\varphi_{0_h}(s)-\varphi_0(s)|\le
 \varepsilon_t(h)+(T-(t_h\wedge t)),
\end{array}
\eeq
where, setting $\varepsilon_t(h):=\sup_{s\in[0,\sigma(t^+)]}|\tilde\varphi_{0_h}(s)-\varphi_0(s)|$ and  $t_h:=\tilde\varphi_{0_h}(\sigma(t^+))$, one has
$$
|t_h-t|\le \varepsilon_t(h)  \ \ \text{and} \ \   \lim_h\varepsilon_t(h)=0.
$$
Finally,   we  recover a new sequence,  denoted by $(\sigma_h)_h$ with  strictly increasing, $1$-Lipschitz continuous inverse functions  $\varphi_{0_h}$ verifying (\ref{sup2}) and   such that $\lim_h\sigma_h(t)=\sigma(t)$   at every  $t\in[0,T[$. Since  $\sigma([0,T[)=\R_+$, differently from the previous case, we cannot apply  straightforwardly \cite[Theorem 5.1]{AR}, but we can  adapt the arguments of its proof  to unbounded maps. 
Let ${\mathcal T}\subset[0,T[$ be the (countable)  set of discontinuity points of  $\sigma$. For every $\tau_j\in{\mathcal T}$, set  
$s_{1,j}:=\lim_{\tau\to \tau_j^-}\sigma(\tau)$ and $s_{2,j}:=\lim_{\tau\to \tau_j^+}\sigma(\tau)$ and define a new sequence $(\varphi_{0_h})_h$
such that $\varphi_{0_h}(s)=\tilde\varphi_{0_h}(s)$ for every $s\notin \cup_{j}[s_{1,j},s_{2,j}]$, while $\varphi_{0_h}(s)$   is a suitable strictly  increasing, $1$-Lipschitz  function obtained,  in each interval $[s_{1,j},s_{2,j}],$ by two  concatenated linear interpolations of values of $\tilde \varphi_{0_h}$, with range equal to the interval $\left[\tilde\varphi_{0_h}(s_{1,j}),
\tilde\varphi_{0_h}(s_{2,j})\right]$ and   such that the inverse functions $\sigma_h:=\varphi_{0_h}^{-1}$ verify $\lim_h\sigma_h(\tau_j)=\sigma(\tau_j)$ for every $j$ (we refer for the precise construction to the proof of  \cite[Theorem 5.1]{AR}).  At this point, it is not difficult to see that $(\varphi_{0_h})_h$, as $(\tilde\varphi_{0_h})_h$,  converges locally uniformly to $\varphi_0$ and  verifies (\ref{sup2}).
\newline In order to show that  $\sigma_h$ converges pointwisely to $\sigma$, let us consider 
the   sequence  $(\bar t_i)_i$ of continuity points of $\sigma$, converging to $T$, which was used in the definition (\ref{convi}),     and set $s_i:=\sigma(\bar t_i).$
By construction,  for all $h$ and   $i$,  we have 
$$
\varphi_{0_h}(s_i)=\tilde\varphi_{0,h}(s_i)=\varphi_{0}(s_i)=\bar t_i,
$$
so that $\sigma_h(\bar t_i)=\tilde\sigma_h(\bar t_i)=\sigma(\bar t_i)$ and  $\sigma_h([0,\bar t_i])=[0,s_i]$. Hence   the  sequence $(\sigma_h)_h$ restricted to $[0,\bar t_i]$ verifies $\lim_h\sigma_h(t)=\sigma(t)$ for  $t\in[0,\bar t_i]$ by the proof of \cite[Theorem 5.1]{AR}.
Since,  for every $t\in[0,T[$  there is some $i$ such that $t\in [0,\bar t_i]$,   we can conclude that   $\sigma_h$ pointwisely converges  to $\sigma$  on the whole interval $[0,T[$.  \qed

\end{document}